\theoremstyle{definition}
\newtheorem{thm}{Theorem}[section]
\newtheorem*{thm*}{Theorem}
\newtheorem*{conj*}{Conjecture}
\newtheorem*{conv*}{Convention}
\newtheorem*{cor*}{Corollary}
\newtheorem*{defn*}{Definition}
\newtheorem*{exa*}{Example}
\newtheorem{exc}[thm]{Exercise}
\newtheorem*{exc*}{Exercise}
\newtheorem*{fact*}{Fact}
\newtheorem{lem}[thm]{Lemma}
\newtheorem*{lem*}{Lemma}
\newtheorem*{prob*}{Problem}
\newtheorem*{prop*}{Proposition}
\newtheorem*{ques*}{Question}
\newtheorem{rmk}[thm]{Remark}
\newtheorem*{rmk*}{Remark}
\newcommand{\C}{\mathbb{C}}
\newcommand{\Q}{\mathbb{Q}}
\newcommand{\R}{\mathbb{R}}
\newcommand{\Z}{\mathbb{Z}}
\newcommand{\cM}{\mathcal{M}}
\newcommand{\cP}{\mathcal{P}}
\newcommand{\al}{\alpha}
\newcommand{\bet}{\beta}
\newcommand{\Gam}{\Gamma}
\newcommand{\gam}{\gamma}
\newcommand{\Del}{\Delta}
\newcommand{\del}{\delta}
\newcommand{\eps}{\varepsilon}
\newcommand{\Lam}{\Lambda}
\newcommand{\Om}{\Omega}
\newcommand{\om}{\omega}
\newcommand{\ra}{\rightarrow}
\newcommand{\ol}{\overline}
\newcommand{\pr}{\prime}
\newcommand{\wt}{\widetilde}
\newcommand{\sm}{\setminus}
\DeclareMathOperator{\GL}{GL}
\DeclareMathOperator{\SL}{SL}
\DeclareMathOperator{\lcm}{lcm}
\DeclareMathOperator{\Per}{Per}
\DeclareMathOperator{\Pres}{Pres}
\DeclareMathOperator{\rank}{rank}
\DeclareMathOperator{\re}{Re}
\DeclareMathOperator{\Tw}{Tw}
\newcommand{\be}{\begin{equation*}}
\newcommand{\ee}{\end{equation*}}
\newcommand{\bex}{\begin{exc}}
\newcommand{\eex}{\end{exc}}
\newcommand{\bpf}{\begin{proof}}
\newcommand{\epf}{\end{proof}}
\title{Saturated orbit closures in the Hodge bundle}
\author[Karl Winsor]{Karl Winsor \\ \\ \monthname[\the\month] \the\day, \the\year}
\begin{document}

\maketitle

\begin{abstract}
We give a new proof of the classification of $\GL^+(2,\R)$-orbit closures that are saturated for the absolute period foliation of the Hodge bundle. As a consequence, we obtain a short proof of the classification of closures of leaves of the absolute period foliation of the Hodge bundle. Our approach is based on a method for classifying $\GL^+(2,\R)$-orbit closures using deformations of flat pairs of pants.
\end{abstract}


\section{Introduction}

The Hodge bundle $\Om\cM_g$ is the bundle of nonzero holomorphic $1$-forms on closed Riemann surfaces of genus $g$. The purpose of this paper is to develop the interplay between two well-known dynamical systems on the Hodge bundle.

For the first, a nonzero holomorphic $1$-form $(X,\om) \in \Om\cM_g$ induces an atlas of charts from the complement of the set of zeros of $\om$ to the complex plane $\C$, whose transition maps are translations in $\C$. The standard action of $\GL^+(2,\R)$ on $\C = \R + \R i$ induces an action on $\Om\cM_g$ by composing with these charts. For the second, integrating $\om$ over closed loops in $X$ gives a homomorphism $H_1(X;\Z) \ra \C$. The image $\Per(\om)$ of this homomorphism is the group of {\em absolute periods} of $\om$. The complex dimension of $\Om\cM_g$ is $4g-3$, provided $g \geq 2$. One can vary $(X,\om)$ while keeping $\Per(\om)$ constant to sweep out a leaf of a holomorphic foliation of $\Om\cM_g$ called the {\em absolute period foliation}. Leaves of the absolute period foliation have complex dimension $2g-3$.

The dynamics of $\GL^+(2,\R)$ and the absolute period foliation exhibit rich and complicated behavior. Both are ergodic with respect to the Lebesgue measure class on $\Om\cM_g$ \cite{Mas:IET}, \cite{Vee:Gauss}, \cite{Vee:flow}, \cite{McM:isoperiodic}, \cite{Ham:ergodicity}, \cite{CDF:transfer}. At the same time, their orbit closures (respectively, leaf closures) are sufficiently well-behaved to give hope for a complete classification. In particular, they are suborbifolds with simple descriptions in suitable local coordinates \cite{EMM:closures}, \cite{CDF:transfer}.

In this paper, we will give a short proof of the classification of closures of leaves of the absolute period foliation when $g \geq 3$, using the structure theory of $\GL^+(2,\R)$-orbit closures developed in \cite{EMM:closures}, \cite{AEM:symplectic}, and \cite{Wri:cylinder}. We first recall this classification, which was first proven in \cite{CDF:transfer}. Let $L$ be a leaf of the absolute period foliation of $\Om\cM_g$. Any two holomorphic $1$-forms in $L$ have the same area and the same group of absolute periods. Let $a > 0$ be this area, and let $\Lam \subset \C$ be the closure of this group of absolute periods. By the Riemann bilinear relations, $\Lam$ must contain a lattice in $\C$. There are three possibilities for $\Lam$.
\begin{enumerate}
    \item If the absolute periods are discrete, then $\Lam$ is a lattice in $\C$.
    \item If the absolute periods are neither discrete nor dense, then $\Lam = \ell + \Z w$ for some line $\ell$ through $0$ and some $w \notin \ell$.
    \item If the absolute periods are dense, then $\Lam = \C$.
\end{enumerate}
In the first case, every $(X,\om) \in L$ admits a branched covering to the torus $\C/\Lam$ defined by integrating $\om$, and $L$ is closed in $\Om\cM_g$. In the intermediate case $\Lam = \ell + \Z w$, define $\Om^\Lam\cM_g$ to be the set of holomorphic $1$-forms $(X,\om) \in \Om\cM_g$ such that $\ell + \Per(\om) = \Lam$. In other words, $\Per(\om)$ is contained in $\Lam$ and intersects every connected component of $\Lam$. Define $\Om_a\cM_g$ to be the set of holomorphic $1$-forms with area $a$, and define $\Om_a^\Lam\cM_g$ similarly. The classification of leaf closures for $g \geq 3$ from \cite{CDF:transfer} is as follows.

\begin{thm} \label{thm:closures}
Suppose $g \geq 3$. If $\Lam$ is not a lattice in $\C$, then the closure of $L$ is $\Om_a^\Lam\cM_g$. Otherwise, $L$ is closed.
\end{thm}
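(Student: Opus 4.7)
The plan is to deduce Theorem~\ref{thm:closures} from the classification of saturated $\GL^+(2,\R)$-orbit closures (the paper's main technical input), together with a Ratner-type density statement for $\Sp(2g,\Z)$ acting on $\C^{2g}$. First, I would fix $(X_0,\om_0) \in L$ and let $\cN$ denote the closure of its $\GL^+(2,\R)$-orbit in $\Om\cM_g$. Since every $\GL^+(2,\R)$-orbit closure is saturated for the absolute period foliation (Avila--Eskin--M\"oller), the classification applies and identifies $\cN$ as either a torus-cover locus $\cT_{\Lam'} := \{(X,\om) : \Per(\om) \subseteq \Lam'\}$ for some lattice $\Lam' \subseteq \C$, or a connected component $\cC$ of a stratum of $\Om\cM_g$.

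If $\Lam$ is a lattice, then $\Per(\om_0) \subseteq \Lam$, placing $(X_0,\om_0) \in \cT_\Lam$. The component of $\cT_\Lam$ containing this point has complex dimension $2g-3$, matching the leaf dimension; since the absolute period foliation is everywhere tangent to $\cT_\Lam$ (absolute periods cannot escape $\Lam$ along isoperiodic deformation), $L$ must equal this component, and hence $L$ is closed.

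If $\Lam$ is not a lattice, the torus-cover case for $\cN$ is excluded, since $\Per(\om_0) \subseteq \Lam'$ would force $\Lam = \ol{\Per(\om_0)} \subseteq \Lam'$ to be a lattice as well; hence $\cN = \cC$. The easy inclusion $\ol L \subseteq \cC \cap \Om_a^\Lam\cM_g$ then follows by continuity of period integrals along a fixed basis of $H_1$. For the reverse inclusion, I would use the period map $J \colon \cC \to \C^{2g}/\Gam$, where $\Gam$ is the monodromy representation on $H_1$ (a finite-index subgroup of $\Sp(2g,\Z)$). The connected components of fibers of $J$ are the leaves of the absolute period foliation, so $\ol L \subseteq J^{-1}\bigl(\ol{J(L)}\bigr)$. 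The target $\ol{J(L)}$ is the closure of a single $\Gam$-orbit through the period vector $p_0$ of $\om_0$, and a Ratner-type density statement for $\Gam \subseteq \Sp(2g,\Z)$ identifies this closure as the set of $q \in \C^{2g}$ satisfying the Riemann bilinear area relation for $a$ and generating a subgroup of $\C$ whose closure is $\Lam$. Pulling back through $J$ gives $\ol L = \cC \cap \Om_a^\Lam\cM_g$.

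The main obstacle is this last density result for $\Sp(2g,\Z)$-orbits in $\C^{2g}$, which must match exactly the geometric description of $\Om_a^\Lam\cM_g$; a related subtlety is that one must rule out $\ol L$ being a proper union of connected components of $J^{-1}(\ol{J(L)})$. I expect the paper's approach via deformations of flat pairs of pants to replace this abstract ergodic-theoretic input with an explicit geometric construction of isoperiodic deformations within $L$ realizing any prescribed target in $\Om_a^\Lam\cM_g \cap \cC$, thereby simultaneously establishing density and connectedness of the relevant closures.
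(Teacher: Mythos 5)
The first step of your proposal --- that every $\GL^+(2,\R)$-orbit closure is saturated for the absolute period foliation --- is false, and is not what Avila--Eskin--M\"oller proved. What they proved is that $p(T^\R_{(X,\om)}\cM)$ is a symplectic subspace of $H^1(X;\R)$; this says nothing about whether $\ker(p)$ lies in $T_{(X,\om)}\cM$, which is what saturation for the absolute period foliation requires (the ``full rel'' condition). Indeed, a Teichm\"uller curve in genus $g \geq 3$ is a closed $\GL^+(2,\R)$-orbit of complex dimension $2$, while leaves of the absolute period foliation have complex dimension $2g-3 \geq 3$, so such an orbit closure cannot be saturated. Because of this, you cannot apply Theorem~\ref{thm:sat} to the orbit closure $\cN$ of $(X_0,\om_0)$, and the dichotomy that drives the rest of your argument does not follow. (There is also a smaller slip: in the non-lattice case, Theorem~\ref{thm:sat} would give $\cN = \Om\cM_g$ rather than a single stratum component.)

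Even setting that aside, the reverse inclusion in your argument rests on an unproved density theorem for $\Sp(2g,\Z)$-orbits on period vectors together with a resolution of the connectivity issue you flag at the end; both are substantial. This is essentially the route of Calsamiglia--Deroin--Francaviglia in \cite{CDF:transfer}, where the $\Sp(2g,\Z)$-density is the comparatively easy step and the real difficulty is showing the leaf meets every connected component of the isoperiodic fiber. The paper does something quite different. Theorem~\ref{thm:sat} is invoked only once, to guarantee that $L$ contains a \emph{star-shaped connected sum} of $g$ flat tori (Lemma~\ref{lem:sum}). The proof of Theorem~\ref{thm:closures} is then an induction on genus (base cases $g = 2,3$ from McMullen via \cite{CDF:transfer}): one ``forgets'' a summand torus $T_j$ to land in a genus $g-1$ leaf $L_j$, applies the inductive hypothesis to deform $(X_j,\om_j)$ arbitrarily within its leaf closure while keeping the slits short and far apart, reglues $T_j$, and then runs an elementary combinatorial argument on area tuples (Case~1) and on the $\gcd$ bookkeeping of lattice indices (Case~2) to show the leaf closure fills out $\Om_a^\Lam\cM_g$. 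This entirely bypasses both the $\Sp(2g,\Z)$ ergodic input and the connectivity subtlety. The flat pair-of-pants deformations you anticipated are used in the proof of Theorem~\ref{thm:sat} itself (Section~\ref{sec:sat}), not in deducing Theorem~\ref{thm:closures} from it.
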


A subset of $\Om\cM_g$ is {\em saturated} for the absolute period foliation if it is a union of leaves of the absolute period foliation. One interesting consequence of Theorem \ref{thm:closures} is a classification of saturated orbit closures for the action of $\GL^+(2,\R)$ on the Hodge bundle.

\begin{thm} \label{thm:sat}
Suppose $g \geq 3$, and let $\cM$ be a saturated orbit closure in $\Om\cM_g$. If there is $(X,\om) \in \cM$ such that $\Per(\om)$ is not discrete, then $\cM = \Om\cM_g$. Otherwise, $\cM$ is a locus of torus covers.
\end{thm}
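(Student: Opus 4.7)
The plan is to combine Theorem \ref{thm:closures} with the $\GL^+(2,\R)$-invariance of $\cM$. Because $\cM$ is closed and saturated, the leaf closure through every $(X,\om) \in \cM$ is contained in $\cM$.

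Suppose first that some $(X_0, \om_0) \in \cM$ has $\Per(\om_0)$ not discrete, with area $a$ and $\Lam = \overline{\Per(\om_0)}$. If $\Lam = \C$, Theorem \ref{thm:closures} identifies the closure of the leaf through $(X_0, \om_0)$ with $\Om_a\cM_g$, so $\Om_a\cM_g \subseteq \cM$. Acting by scalars $\mat{t}{0}{0}{t}$ for $t > 0$ then yields $\Om_{t^2 a}\cM_g \subseteq \cM$ for every $t$, and hence $\cM = \Om\cM_g$.

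The substantive case is $\Lam = \ell + \Z w$ intermediate, where Theorem \ref{thm:closures} gives $\Om_a^\Lam\cM_g \subseteq \cM$. Since $\cM$ is $\SL(2,\R)$-invariant and $g \cdot \Om_a^\Lam\cM_g = \Om_a^{g\Lam}\cM_g$, we have $\Om_a^{g\Lam}\cM_g \subseteq \cM$ for every $g \in \SL(2,\R)$. I would choose $g_n \in \SL(2,\R)$ so that $g_n \Lam$ becomes $1/n$-dense in $\C$; after normalizing $\Lam = \R + i\Z$, the hyperbolic diagonals $g_n = \mat{n}{0}{0}{1/n}$ give $g_n \Lam = \R + i\Z/n$. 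The key claim is that $\overline{\bigcup_n \Om_a^{g_n \Lam}\cM_g} = \Om_a\cM_g$. To establish this, I would approximate a given $(Y, \eta) \in \Om_a\cM_g$ by forms $(Y_n, \eta_n)$ whose absolute periods on a fixed symplectic basis are rounded to nearby values in $g_n \Lam$ (ensuring the imaginary-part projection is surjective onto $g_n \Lam / g_n \ell$, rather than landing in a proper subgroup) with a small real compensating adjustment to preserve area $a$; existence of such $\eta_n$ follows from the fact that the absolute period map $\Om\cM_g \to H^1(X; \C)$ is a local submersion. This would force $\Om_a\cM_g \subseteq \cM$, and the scaling argument then yields $\cM = \Om\cM_g$.

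If instead every $(X, \om) \in \cM$ has $\Per(\om)$ discrete, then each such $(X, \om)$ is a branched torus cover $X \to \C/\Per(\om)$ defined by integrating $\om$, and $\cM$ is a locus of torus covers. The main obstacle is the density claim in the intermediate case: simultaneously enforcing containment of periods in $g_n \Lam$, surjectivity onto the quotient $g_n \Lam / g_n \ell$, and the exact area $a$ requires careful use of period coordinates, but is plausible because the period map is a local submersion and $g_n \Lam$ becomes arbitrarily dense as $n \to \infty$.
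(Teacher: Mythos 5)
Your proposal derives Theorem \ref{thm:sat} from Theorem \ref{thm:closures}, but in this paper the logical dependency runs the other way: Section \ref{sec:leaf} proves Theorem \ref{thm:closures} \emph{assuming} Theorem \ref{thm:sat} (via Lemma \ref{lem:sum}), and Theorem \ref{thm:sat} itself is proven independently in Section \ref{sec:sat}. So as a contribution to this paper your argument is circular. You could of course make it non-circular by invoking the CDF classification of leaf closures as a black box, but then you have not given a new proof of anything; the paper's explicit aim is a new and self-contained proof of Theorem \ref{thm:sat}, from which Theorem \ref{thm:closures} then follows.

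The paper's actual proof of Theorem \ref{thm:sat} is entirely different in mechanism. It works in the principal stratum and uses the $\GL^+(2,\R)$ orbit-closure structure theory (affine invariant submanifolds, rank, twist and cylinder-preserving spaces from \cite{EMM:closures}, \cite{AEM:symplectic}, \cite{Wri:cylinder}). After ruling out rank $1$ with $k(\cM)\neq\Q$ (Lemma \ref{lem:rank2}) and identifying the rank $1$, $\Q$ field-of-definition case with torus covers, the core is Theorem \ref{thm:rank2}: starting from a horizontally periodic form with $3g-3$ cylinders all of whose horizontal saddle connections are loops (Lemma \ref{lem:pop}), it uses a perturbation to produce a flat pair of pants with $\ol{\Q}$-independent circumferences, then uses isoperiodic Schiffer-type slit surgeries (the ${\rm Sch}_{\varphi_1}$ moves) to propagate cylinder shears to every horizontal cylinder, forcing $\rank(\cM) = g$ and then full dimension. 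None of this appears in your proposal.

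Independently of the circularity, your density claim $\overline{\bigcup_n \Om_a^{g_n\Lam}\cM_g} = \Om_a\cM_g$ is believable but underspecified: you must simultaneously round imaginary parts into $\frac{1}{n}\Z$, ensure they generate exactly $\frac{1}{n}\Z$ (not a proper subgroup --- a genericity condition you only wave at), and restore the area via the Riemann bilinear form, all while staying in a chart where the period map is a submersion. These are manageable but would need to be spelled out. The larger issue, though, is that the route through Theorem \ref{thm:closures} is simply not available here.
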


We now outline our approach to Theorem \ref{thm:closures}, which is very different from the one in \cite{CDF:transfer}. Our first observation is that Theorem \ref{thm:closures} can be quickly deduced from Theorem \ref{thm:sat}, using an inductive argument and a carefully chosen connected sum construction. We present this argument in Section \ref{sec:leaf}.

In Section \ref{sec:orbit}, we recall the results on orbit closures for the action of $\GL^+(2,\R)$ from \cite{EMM:closures}, \cite{AEM:symplectic}, and \cite{Wri:cylinder} that we will use in Section \ref{sec:sat} to prove Theorem \ref{thm:sat}. The starting point for our approach to Theorem \ref{thm:sat} is the fact that every orbit closure $\cM$ contains a horizontally periodic holomorphic $1$-form \cite{SW:minimal}. Using explicit local surgeries for navigating leaves of the absolute period foliation, we produce a holomorphic $1$-form $(X,\om)$ whose horizontal cylinders provide a flat pair of pants decomposition of $(X,\om)$. In this case, to show $\cM$ is dense in $\Om\cM_g$, results from \cite{Wri:cylinder} tell us it is enough to show that every horizontal cylinder on $(X,\om)$ can be sheared while remaining in $\cM$. Many linear combinations of these cylinder shears define paths inside the leaf of the absolute period foliation through $(X,\om)$. Using these shears and the structure of the tangent space to $\cM$ at a point in the principal stratum as a subspace of $H^1(X,Z(\om);\C)$, we find a {\em free pair of pants} on $(X,\om)$, that is, a flat pair of pants whose legs can be sheared and stretched freely while remaining in $\cM$. By stretching the legs of this pair of pants and navigating the associated leaves of the absolute period foliation, we are able to find enough free pairs of pants to show that every horizontal cylinder on $(X,\om)$ can be sheared while remaining in $\cM$. This last step is delicate, and requires travelling a large distance in a leaf of the absolute period foliation, causing many horizontal cylinders to collapse and re-emerge in the process, while still maintaining enough control on the horizontal cylinder decompositions to extract precise information about the tangent spaces to $\cM$. In general, the behavior of cylinder decompositions along leaves of the absolute period foliation can be quite wild, see for instance \cite{HW:Rel}, \cite{McM:isoperiodic}.

The methods introduced in this paper for proving Theorem \ref{thm:sat} are more broadly applicable to the problem of classifying orbit closures for the action of $\GL^+(2,\R)$ on {\em strata} $\Om\cM_g(k_1,\dots,k_n)$ of holomorphic $1$-forms with prescribed zero orders. In particular, much of the proof of Theorem \ref{thm:sat} only makes use of deformations supported near a single simple zero, and the proof can be adapted to obtain the following.

\begin{thm} \label{thm:free}
If $\cM$ is an orbit closure in a stratum such that there is $(X,\om) \in \cM$ with a free pair of pants, then $\cM$ is a connected component of a stratum.
\end{thm}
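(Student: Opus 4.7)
The plan is to leverage the given free pair of pants to force the tangent space $T_{(X,\om)}\cM \subset H^1(X, Z(\om); \C)$ to be full-dimensional at $(X,\om)$, thereby concluding that $\cM$ is open in its stratum and hence a connected component. First I would apply $\GL^+(2,\R)$ to arrange that the free pair of pants $P \subset (X,\om)$ has horizontal boundary, with its three legs being horizontal cylinders $C_1, C_2, C_3$. The freeness of $P$ places the standard shear and stretch of each $C_i$ in $T_{(X,\om)}\cM$, giving three $\C$-independent directions in relative cohomology; together with the $\GL^+(2,\R)$-directions and the shears of every cylinder in the $\cM$-equivalence class of any $C_i$ (via Wright's cylinder deformation theorem \cite{Wri:cylinder}), this provides the initial pool of tangent vectors.

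The main work is to propagate this freedom to every horizontal cylinder on $(X,\om)$, following the strategy outlined for Theorem \ref{thm:sat}. Using the stretch degrees of freedom of the $C_i$ to navigate a path through the leaf of the absolute period foliation inside $\cM$, one lets the horizontal cylinder decomposition evolve as cylinders collapse and re-emerge in the wild fashion of \cite{HW:Rel}, \cite{McM:isoperiodic}. By carefully designing the path, every horizontal cylinder on $(X,\om)$ can be arranged to become $\cM$-equivalent along the way to a leg of some free pair of pants encountered during the deformation; applying Wright's cylinder deformation theorem at each stage and then transporting back to $(X,\om)$ using the linearity of $T\cM$ in period coordinates shows that every horizontal cylinder shear at $(X,\om)$ lies in $T_{(X,\om)}\cM$.

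With every horizontal cylinder shear in $T_{(X,\om)}\cM$, combined with the $\GL^+(2,\R)$-directions and the ability to rotate slightly and repeat the argument for cylinder decompositions in other directions, one obtains a full-dimensional subspace of $H^1(X, Z(\om); \C)$. By the structure theorem of \cite{EMM:closures}, $\cM$ is a closed immersed suborbifold of its stratum cut out by $\R$-linear equations in period coordinates, so full dimensionality forces $\cM$ to be open in the stratum, and by closedness $\cM$ is a connected component.

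The hard part is the propagation step: tracking the horizontal cylinder decomposition along a long path in the absolute period foliation while preserving enough free structure to apply Wright's cylinder deformation theorem to each cylinder one meets. This is the crux of the proof of Theorem \ref{thm:sat}; since that argument only uses deformations supported near a single simple zero of $\om$ on the boundary of the free pair of pants, it should adapt directly to the stratum setting here to establish Theorem \ref{thm:free}.
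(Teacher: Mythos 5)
There is a genuine gap, and it concerns what you actually get once you've shown every horizontal cylinder can be sheared. You correctly identify the rough strategy — put the free pair of pants horizontal, use freeness of $C_0^Z,C_1^Z,C_2^Z$, propagate shears to every horizontal cylinder — but you stop short of the linear algebra that forces full dimensionality, and you skip a hypothesis that the propagation step actually requires.

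First, the propagation mechanism you cite (the proof of Theorem \ref{thm:rank2}, repackaged in Remark \ref{rmk:fpp}) only kicks in when the circumferences of two legs of the free pair of pants have ratio outside $\overline{\Q}$ (or at least outside $\bigcup_d \Q(\sqrt d)$). A free pair of pants by itself gives no such arithmetic condition. In Theorem \ref{thm:sat} this irrationality is manufactured via Lemma \ref{lem:ratiocircum}, which uses $\rank(\cM)\geq 2$ and $\Tw=\Pres$, neither of which you have here. The paper's proof has to earn the irrationality through a separate mechanism (see next point), so you cannot just ``adapt directly.'' Second, and more fundamentally: even after you obtain $\eta_C\in T_{(X,\om)}\cM$ for every horizontal cylinder $C$, these together with the $\GL^+(2,\R)$ directions span only about $|V(\Gam)|$ real dimensions, which is far short of $\dim_\R H^1(X,Z(\om);\R) = |V(\Gam)| + \dim_\Q L(\Gam)$. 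In the saturated setting of Theorem \ref{thm:rank2} the missing $\ker(p)$ directions come for free; without saturation they must be supplied. Your ``rotate slightly and repeat in other directions'' does not close this gap, since rotating just gives you cylinder shears for another direction, not the relative/loop directions in the original coordinates. Third, your plan invokes ``navigating a path through the leaf of the absolute period foliation inside $\cM$,'' but without a saturation hypothesis there is no reason such a path stays in $\cM$.

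The paper addresses all three at once by introducing the loop deformations $\eta_\ell$ attached to directed loops $\ell$ in the cylinder digraph. A vertical stretch of $C_2^Z$ (legitimate since $i\eta_{C_2^Z}\in T\cM$), followed by a twin-path surgery $\mathrm{Sch}_{\varphi_1}$ along a piecewise-geodesic realization of $\ell$ through the zero $Z$, followed by restoring $C_2^Z$, lands precisely at $(X,\om)+w_2^Z\eta_\ell$, and linearity of $\cM$ in period coordinates shows this lies in $\cM$. A density argument over the map $F:(\R_{>0})^M\to$ stratum then upgrades this to $\eta_{\ell_k}\in T^\R_{(X,\om)}\cM$ for all $k$. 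These $\eta_{\ell_k}$ are what supply (a) the missing dimensions complementary to the $\eta_C$'s, and (b) the ability to perturb $w_1^Z/w_2^Z$ off $\overline{\Q}$ so that Remark \ref{rmk:fpp} applies. Your sketch is missing this entire mechanism, which is the real content of the proof.
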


Theorem \ref{thm:free} admits many variations beyond strata with a simple zero, and we indicate some of these at the end of Section \ref{sec:sat}. The methods in our proof of Theorem \ref{thm:closures} can also be applied to the problem of classifying closures of leaves of the absolute period foliation of a stratum, using much more complicated connected sum constructions. We pursue these applications in forthcoming work. \\

\paragraph{\bf Notes and references.} The approach in \cite{CDF:transfer} for classifying closures of leaves of the absolute period foliation of $\Om\cM_g$ is very different and primarily topological. The classification theorem in \cite{CDF:transfer} is a consequence of a nearly complete answer to the question of when two homologically marked $1$-forms can be connected by a path of isoperiodic $1$-forms. Our approach and the approach in \cite{CDF:transfer} both build on results in \cite{McM:isoperiodic}, which established the ergodicity of the absolute period foliation on $\Om_1\cM_g$ for $g = 2,3$. Ergodicity on $\Om_1\cM_g$ for $g \geq 4$ was proven independently in \cite{CDF:transfer} and \cite{Ham:ergodicity}, and was proven for connected strata in \cite{Win:ergodic}. The existence of dense real Rel flow orbits in the area-$1$ locus of non-minimal stratum components is proven in \cite{Win:dense}, and a conditional theorem on the ergodicity of these real Rel flows is proven in ongoing work of Chaika-Weiss, conditional on a measure-rigidity theorem for the action of $\SL(2,\R)$ on products of strata. Explicit examples of dense leaves of the absolute period foliation in the area-$1$ locus of certain stratum components were given \cite{HW:Rel} and \cite{Ygo:dense}, and explicit full measure sets of dense leaves in the area-$1$ locus of connected strata were given in \cite{Win:ergodic}. The dynamics of absolute period foliations in strata of meromorphic differentials have been studied recently in \cite{CD:2poles} and \cite{KLS:realnorm}. The problem of classifying saturated orbit closures in strata of holomorphic $1$-forms has also been solved (as part of more general results) in genus $2$ in \cite{McM:SL2R}, in hyperelliptic components of strata \cite{Api:hyperelliptic}, \cite{Api:rank1}, and in genus $3$ for certain strata \cite{AN:rank22}, \cite{MW:fullrank}, \cite{Ygo:nonarith}. \\

\paragraph{\bf Acknowledgements.} During the preparation of this paper, the author was supported by an NSF GRFP under grant DGE-1144152 and by a Simons Postdoctoral Fellowship at the Fields Institute.


\section{Absolute period leaf closures} \label{sec:leaf}

In this section, we prove Theorem \ref{thm:closures} assuming Theorem \ref{thm:sat}. We begin with a flat-geometric connected sum construction that will be crucial to our proof of Theorem \ref{thm:closures}. For background on the Hodge bundle, the action of $\GL^+(2,\R)$, and the absolute period foliation (also known as the isoperiodic foliation or the kernel foliation), we refer to the survey \cite{Zor:survey}.

The {\em principal stratum} $\Om\cM_g(1^{2g-2})$ is the open dense subset of $\Om\cM_g$ consisting of holomorphic $1$-forms with $2g-2$ distinct simple zeros. Using standard surgeries for splitting higher-order zeros into lower-order zeros (see Section 8 in \cite{EMZ:principal}), it is easy to show that every leaf of the absolute period foliation of $\Om\cM_g$ intersects the principal stratum. Thus, for the rest of this section, we will work entirely in the principal stratum. We define $\Om_a\cM_g(1^{2g-2})$ and $\Om_a^\Lam\cM_g(1^{2g-2})$ analogously as in the introduction. {\em Local period coordinates} on $\Om\cM_g(1^{2g-2})$ are defined by integrating holomorphic $1$-forms over a basis for the relative homology group $H_1(X,Z(\om);\Z)$, where $Z(\om)$ is the set of zeros. \\

\paragraph{\bf Star-shaped connected sums.} Fix $g \geq 3$. For $1 \leq j \leq g$, choose a lattice $\Lam_j = \Z a_j + \Z b_j$ in $\C$, and let $T_j = (\C/\Lam_j, dz)$ be the associated flat torus. For $2 \leq j \leq g$, choose embedded oriented segments $s_j \subset T_1$ and $s_j^\pr \subset T_j$ such that $\int_{s_j} dz = \int_{s_j^\pr} dz$, and let $c_j = \int_{s_j} dz$. We assume that the segments $s_2,\dots,s_g$ are disjoint. For $3 \leq j \leq g$, choose a path $\gam_j \subset T_1$ from the starting point of $s_2$ to the starting point of $s_j$, such that $\gam_j$ is disjoint from all of the interiors of the segments $s_k,s_k^\pr$, and let $d_j = \int_{\gam_j} dz$. For $2 \leq j \leq g$, slit $T_1$ along $s_j$, slit $T_j$ along $s_j^\pr$, and reglue opposite sides. The result is a holomorphic $1$-form $(X,\om) \in \Om\cM_g(1^{2g-2})$ with a pair of simple zeros arising from the endpoints of each pair of segments $s_k,s_k^\pr$. We will call a presentation of a holomorphic $1$-form as a connected sum of $g$ flat tori as above a {\em star-shaped connected sum}. See Figure \ref{fig:sscs}.

Star-shaped connected sums provide parameterizations of open subsets of $\Om\cM_g(1^{2g-2})$. Indeed, the parameters $a_j,b_j$ are integrals of $\om$ along closed geodesics $\al_j,\bet_j$ in $T_j$, and the parameters $c_j,d_j$ are integrals of $\om$ along paths in $T_1$ between zeros of $\om$. The associated relative homology classes form a basis of $H_1(X,Z(\om);\Z)$. Thus, the $4g-3$ complex numbers $\{a_j,b_j\}_{j=1}^g \cup \{c_j\}_{j=2}^g \cup \{d_j\}_{j=3}^g$ provide local period coordinates. The action of $\GL^+(2,\R)$ on $\Om\cM_g(1^{2g-2})$ respects star-shaped connected sums.

\begin{figure}
    \centering
    \includegraphics[width=0.6\textwidth]{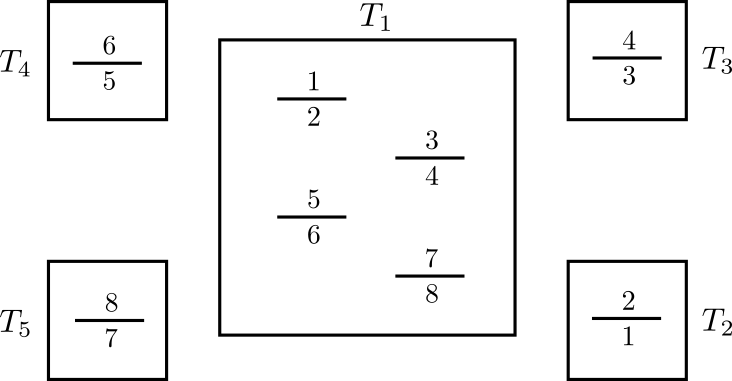}
    \caption{An example of a star-shaped connected sum in genus $5$.}
    \label{fig:sscs}
\end{figure}

\begin{lem} \label{lem:sum}
If $(X,\om) \in \Om\cM_g(1^{2g-2})$ is such that $\Per(\om)$ is not discrete, then there exists a star-shaped connected sum in the leaf of the absolute period foliation through $(X,\om)$.
\end{lem}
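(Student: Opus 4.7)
The plan is to deform $(X,\omega)$ within its leaf by Rel moves until we reach a form that admits a star-shaped connected sum presentation. We proceed by induction on $g \geq 3$, with the main step being a \emph{torus extraction}, i.e., the inverse of the slit-glue operation defining an SSC. Explicitly: given a form $(Y,\eta)$ and a pair of saddle connections $\sigma,\sigma'$ joining two simple zeros with $\int_\sigma \eta = \int_{\sigma'}\eta$ such that $\sigma \cup \sigma'$ bounds a subsurface topologically equivalent to a one-holed torus, cutting along $\sigma \cup \sigma'$ and closing up the two resulting boundary circles decomposes $(Y,\eta) = T \#_s (Y'',\eta'')$ with $T$ a flat torus.

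\paragraph{Reduction to a torus extraction and induction.} The main geometric task is to show that the leaf through $(X,\omega)$ contains a form $(X',\omega')$ admitting a torus extraction. Given this, we decompose $(X',\omega') = T_g \#_{s_g} (X'',\omega'')$ with $(X'',\omega'') \in \Om\cM_{g-1}(1^{2g-4})$, and since $\Per(\omega'') \supseteq \Per(\omega')$ remains non-discrete, the inductive hypothesis (applied when $g \geq 4$) yields an SSC presentation of $(X'',\omega'')$ with $g-1$ tori centered at a torus $T_1$. Any Rel path in the leaf of $(X'',\omega'')$ lifts canonically to a Rel path in the leaf of $(X,\omega)$ by keeping $T_g$ fixed, and Rel moves in the combined surface can freely shift the slit endpoints; using these, we can relocate $s_g$ so that its image in $(X'',\omega'')$ lies inside $T_1$. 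This produces the desired star-shaped presentation of $(X,\omega)$ with $g$ tori. For the base case $g = 3$, one torus extraction reduces the problem to writing a genus-$2$ form with non-discrete absolute periods as a slit union of two flat tori, which follows from the classification of non-discrete isoperiodic leaves in genus $2$ (e.g., \cite{McM:isoperiodic}).

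\paragraph{Main obstacle.} The principal difficulty is realizing the torus extraction: we must produce, via Rel deformations in the leaf, a pair of saddle connections of equal period bounding a one-holed torus. Non-discreteness of $\Per(\omega)$ is essential. In the dense case $\ol{\Per(\omega)} = \C$, we exploit arbitrarily small elements of $\Per(\omega)$ to perform fine Rel adjustments that align two saddle connections; in the intermediate case $\ol{\Per(\omega)} = \ell + \Z w$, we use the density of $\Per(\omega) \cap \ell$ along $\ell$ for the same purpose. Making this precise requires passing along the Rel flow to a form with a convenient horizontal cylinder decomposition, identifying a horizontal cylinder whose core curve separates a one-holed torus, and then using the top and bottom horizontal boundaries of the cylinder (which have equal period, the circumference) as $\sigma$ and $\sigma'$. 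Tracking horizontal cylinder decompositions under Rel paths in this step is a lower-genus analog of the cylinder analysis that appears in the proof of Theorem \ref{thm:sat}.
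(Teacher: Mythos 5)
Your proposal takes a completely different route from the paper, and it has a genuine gap. The paper's proof is a one-line indirect argument: the set of $(X,\om)$ whose leaf contains no star-shaped connected sum is a proper, closed, $\GL^+(2,\R)$-invariant, saturated subset of $\Om\cM_g(1^{2g-2})$, so Theorem \ref{thm:sat} (which is assumed throughout Section \ref{sec:leaf}) forces every form in this set to have discrete absolute periods. This is the single place in Section \ref{sec:leaf} where Theorem \ref{thm:sat} is used, and it lets the author avoid exactly the direct geometric construction you are attempting. You have essentially tried to re-prove the hard content of the paper rather than noticing the available shortcut.

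As a standalone direct argument, your sketch does not close. The most concrete problem is the claim ``$\Per(\om'') \supseteq \Per(\om')$ remains non-discrete.'' The inclusion is in fact reversed: cycles on $(X'',\om'')$ avoid the slit and embed in $(X',\om')$, so $\Per(\om'') \subseteq \Per(\om')$, and indeed $\Per(\om') = \Per(\om'') + \Lam_g$ where $\Lam_g$ is the lattice of the extracted torus. Thus $\Per(\om'')$ can be a lattice even when $\Per(\om')$ is not, and your induction hypothesis need not apply. Handling this is precisely the role of Lemma \ref{lem:nondiscrete} in the paper, which controls \emph{which} torus can safely be removed while preserving non-discreteness; an arbitrary torus extraction will not do. Beyond this, the central step --- producing, via Rel moves, two twin saddle connections that also bound a one-holed torus --- is never actually constructed; ``exploiting arbitrarily small periods to align two saddle connections'' says nothing about realizing the required topological configuration, and you yourself note this is ``a lower-genus analog of the cylinder analysis that appears in the proof of Theorem \ref{thm:sat},'' which should be a warning sign that you are redoing the main work rather than using it. Two smaller issues: relocating $s_g$ into $T_1$ by Rel moves requires keeping the slit short relative to all relevant distances throughout the deformation (the paper handles this carefully in the proof of Theorem \ref{thm:closures}, not in this lemma), and in the base case $g=3$ a genus-two eigenform has non-discrete periods but a closed leaf, so the existence of a two-torus connected sum in that leaf needs a citation to McMullen's splitting result rather than to the leaf-closure classification.
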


\begin{proof}
The set of holomorphic $1$-forms in $\Om\cM_g(1^{2g-2})$ that do not satisfy the conclusion of the lemma is a proper closed $\GL^+(2,\R)$-invariant subset that is saturated for the absolute period foliation. By Theorem \ref{thm:sat}, every holomorphic $1$-form in this set has a discrete group of absolute periods.
\end{proof}

\begin{lem} \label{lem:nondiscrete}
Let $\Lam_1,\dots,\Lam_g$ be lattices in $\C$ such that $\Lam_1 + \cdots + \Lam_g$ is not discrete. Then there is $2 \leq j \leq g$ such that $\Lam_1 + \Lam_j$ is not discrete. Moreover, if $\Lam_1 + \cdots + \Lam_g$ is dense, then there is $2 \leq k \leq g$ such that $\Lam_1 + \Lam_j + \Lam_k$ is dense.
\end{lem}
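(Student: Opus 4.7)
The plan is to reduce both statements to the structure theorem for closed subgroups of $\C \cong \R^2$ containing a lattice, namely that any such subgroup is either a lattice, of the form $\ell + \Z w$ for some line $\ell$ through the origin and some $w \notin \ell$, or all of $\C$.

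For the first statement, I would argue the contrapositive: suppose each $L_j := \Lam_1 + \Lam_j$ is discrete for $2 \leq j \leq g$, and deduce that $\Lam_1 + \cdots + \Lam_g$ is discrete. Each such $L_j$ is a discrete subgroup of $\C$ containing the full-rank lattice $\Lam_1$, hence is itself a lattice in which $\Lam_1$ has finite index, so $L_j / \Lam_1$ is a finite subgroup of the torus $\C / \Lam_1$. The quotient $(\Lam_1 + \cdots + \Lam_g)/\Lam_1$ is generated by the finite subgroups $L_j / \Lam_1$ for $j = 2, \dots, g$, so it is a finitely generated torsion abelian group and therefore finite. Pulling back, $\Lam_1 + \cdots + \Lam_g$ is a lattice and in particular discrete.

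For the second statement, I would take $j$ as provided by the first part and let $H$ denote the closure of $\Lam_1 + \Lam_j$. If $H = \C$ any $k \neq j$ works, so the interesting case is $H = \ell + \Z w$ for a line $\ell$ through $0$ and $w \notin \ell$. I project to $\C / \ell \cong \R$ via a map $\pi$, set $\beta = \pi(w) \neq 0$, and note that $\pi(\Lam_1 + \Lam_j) = \Z\beta$, since $\pi(H) = \Z\beta$ is already discrete. The key claim is that some $\Lam_k$ satisfies $\pi(\Lam_k) \not\subset \Q\beta$. Otherwise $\pi(\Lam_1 + \cdots + \Lam_g)$ would be a finitely generated subgroup of $\Q\beta \cong \Q$, hence cyclic (every finitely generated subgroup of $\Q$ is cyclic), hence discrete in $\R$, contradicting the density of $\Lam_1 + \cdots + \Lam_g$ in $\C$. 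Such a $k$ automatically lies in $\{2, \dots, g\}$, since $\pi(\Lam_1) \subset \Z\beta \subset \Q\beta$.

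To finish, I would pick $u \in \Lam_k$ with $\pi(u)/\beta$ irrational and let $K$ denote the closure of $\Lam_1 + \Lam_j + \Lam_k$. Then $K \supset \ell$, so the classification forces $K = \ell + \Z w'$ for some $w'$ or $K = \C$. The first case is impossible: it would make $\pi(K) = \Z \pi(w')$ cyclic, contradicting that $\pi(K) \supset \Z\beta + \Z\pi(u)$ is non-cyclic by the irrationality of $\pi(u)/\beta$. Hence $K = \C$, as desired. The main obstacle is just the finishing dichotomy for $K$; once the reduction via $\pi$ to elementary subgroup questions in $\R$ is in place, each remaining step is routine.
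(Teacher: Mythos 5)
Your argument is correct. For the first statement you argue exactly as the paper does: contrapositive, each $L_j = \Lam_1 + \Lam_j$ discrete forces it to be a lattice of finite index over $\Lam_1$, and the total sum modulo $\Lam_1$ is generated by finitely many finite groups, hence finite, hence the sum is discrete (the paper phrases this via $L_j \subset \tfrac{1}{n_j}\Lam_1$ and taking $N = \lcm_j n_j$, but it is the same computation). For the second statement your route is genuinely different. The paper reruns the same index/lcm argument one level up: if every closure $\Lam_k^\pr$ of $\Lam_1 + \Lam_j + \Lam_k$ is a proper closed subgroup, then each has finite index over $\Lam_1^\pr = \ell + \Z w$, and an lcm bound shows the full sum sits inside $\tfrac{1}{N^\pr}\Lam_1^\pr$, which is not dense. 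You instead pass to the quotient $\pi : \C \to \C/\ell \cong \R$, reduce to a one-dimensional statement, and use that a finitely generated subgroup of $\Q\beta$ is cyclic to produce a $\Lam_k$ with $\pi(\Lam_k) \not\subset \Q\beta$, after which the irrational ratio $\pi(u)/\beta$ rules out a discrete image and forces $K = \C$. Your version is a bit more explicit (it exhibits the element $u$ and the irrational ratio witnessing density) and also shows directly that the $k$ you find is distinct from $j$, whereas the paper's argument is more uniform, using the same finite-index mechanism twice. Both are short and correct; the trade-off is explicitness versus uniformity.
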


\begin{proof}
For the first claim, suppose that $\Lam_1 + \Lam_j$ is discrete for all $2 \leq j \leq g$. Since $\Lam_1$ is a lattice, the index $[\Lam_1 + \Lam_j : \Lam_1]$ is finite, so $\Lam_1 + \Lam_j \subset \frac{1}{n_j}\Lam_1$ for some positive integer $n_j$. Letting $N = \lcm_j n_j$, we see that $\Lam_1 + \cdots + \Lam_g \subset \frac{1}{N} \Lam_1$ is discrete, a contradiction. For the second claim, we know that some $\Lam_1 + \Lam_j$ is not discrete. Let $\Lam_1^\pr$ be the closure of $\Lam_1 + \Lam_j$. We may assume $\Lam_1^\pr \neq \C$. Then $\Lam_1^\pr = \ell + \Z w$ for some line $\ell$ through the origin and some $w \notin \ell$. Let $\Lam_k^\pr$ be the closure of $\Lam_1 + \Lam_j + \Lam_k$, and suppose that $\Lam_k^\pr \neq \C$ for all $2 \leq k \leq g$. Then $[\Lam_k^\pr : \Lam_1^\pr]$ is finite, so $\Lam_k^\pr \subset \frac{1}{n_k^\pr} \Lam_1^\pr$ for some positive integer $n_k^\pr$. Letting $N^\pr = \lcm_k n_k^\pr$, we see that $\Lam_1 + \cdots + \Lam_g \subset \frac{1}{N^\pr} \Lam_1^\pr$ is not dense, a contradiction.
\end{proof}

We are now ready to prove Theorem \ref{thm:closures}. Our only use of Theorem \ref{thm:sat} is in Lemma \ref{lem:sum} above, which allows us to restrict our attention to leaves of the absolute period foliation containing star-shaped connected sums.

\begin{proof}
(of Theorem \ref{thm:closures}) We induct on the genus $g$. The base cases $g = 2,3$ are a consequence of Theorem 1.1 in \cite{McM:isoperiodic}, as shown in Theorem 3.3 and Section 3.5 in \cite{CDF:transfer}.

Suppose $g \geq 4$. Fix $(X,\om) \in \Om\cM_g(1^{2g-2})$ such that $\Per(\om)$ is not discrete, and let $L$ be the leaf of the absolute period foliation through $(X,\om)$. By Lemma \ref{lem:sum}, we may assume that $(X,\om)$ is a star-shaped connected sum. Let $T_j = (\C/\Lam_j, dz)$, $1 \leq j \leq g$, be the associated flat tori, and let $s_k,s_k^\pr$, $2 \leq k \leq g$, be the associated slits. Let $a_j$ be the area of $T_j$, so $a = a_1 + \cdots + a_g$ is the area of $(X,\om)$. Throughout, we tacitly assume that the slits $s_k,s_k^\pr$ are very short, and that the distances between the slits on $T_1$ are very large relative to their lengths, since this can always be arranged by moving along $L$.

For $2 \leq j \leq g$, let $(X_j,\om_j) \in \Om\cM_{g-1}(1^{2g-4})$ be obtained from $(X,\om)$ by ``forgetting'' $T_j$ and the slits $s_j,s_j^\pr$. In other words, $(X_j,\om_j)$ is obtained by slitting $s_j,s_j^\pr$, regluing opposite sides to obtain a disconnected surface with $2$ components, and taking the component of genus $g-1$. Let $L_j$ be the leaf of the absolute period foliation of $\Om\cM_{g-1}(1^{2g-4})$ through $(X_j,\om_j)$. By Lemma \ref{lem:nondiscrete}, after permuting $T_2,\dots,T_g$, we may assume that $\Lam_1 + \Lam_2$ is not discrete. Then since $g \geq 4$, both $\Per(\om_g)$ and $\Per(\om_{g-1})$ are not discrete. We consider $2$ cases depending on the closure of $\Per(\om)$. \\

\paragraph{\bf Case 1.} Suppose $\Per(\om)$ is dense. By Lemma \ref{lem:nondiscrete}, at least one of $\Per(\om_g)$ or $\Per(\om_{g-1})$ is dense. After swapping $T_{g-1}$ and $T_g$, we may assume $\Per(\om_g)$ is dense. Then by induction, $L_g$ is dense in $\Om_{a-a_g}\cM_{g-1}(1^{2g-4})$. Choose lattices $\Lam_j^\pr$ for $1 \leq j \leq g - 1$ such that the areas of the flat tori $T_j^\pr = (\C/\Lam_j^\pr,dz)$ sum to $a-a_g$. Let $(Y,\eta) \in \Om_a\cM_g(1^{2g-2})$ be a star-shaped connected sum with tori $T_1^\pr,\dots,T_{g-1}^\pr,T_g$, and let $(Y_g,\eta_g) \in \Om_{a-a_g}\cM_{g-1}(1^{2g-4})$ be obtained from $(Y,\eta)$ by forgetting $T_g$ and the slits $s_g,s_g^\pr$. There is a sequence in $L_g$ approaching $(Y_g,\eta_g)$. By connecting consecutive points in this sequence along paths in $L_g$, we can produce a continuous map $\varphi_g : [0,\infty) \ra L_g$ such that the points $\varphi_g(n)$, $n \in \Z_{\geq 0}$, approach $(Y_g,\eta_g)$ as $n \ra \infty$. By compactness, there is $\eps_n > 0$ such that on any holomorphic $1$-form in $\varphi_g([0,n])$, the minimum distance between two distinct zeros is greater than $\eps_n$. After shrinking the slits $s_g,s_g^\pr$ on $(X,\om)$ to have length less than $\eps_n$, the path $\varphi_g([0,n])$ determines a path $\varphi([0,n])$ in $L$ by deforming $(X_g,\om_g)$ according to $\varphi_g([0,n])$ while keeping $T_g$ and the slits $s_g,s_g^\pr$ fixed. By restoring the slits $s_g,s_g^\pr$ on $\varphi(n)$ to their original length and direction, we obtain a sequence in $L$ approaching $(Y,\eta)$. In short, we may arbitrarily modify $(X_g,\om_g)$ while preserving its area $a - a_g$ (keeping the slits very short and very far apart relative to their length), while staying in the closure of $L$.

By rotating $\Lam_1$ slightly, we can arrange that $\Lam_1 + \Lam_j$ is dense for all $2 \leq j \leq g$. Since we are not changing the area of any of the $T_j$ in the process, the previous paragraph shows in particular that we can rotate $\Lam_1$ while staying in the closure of $L$. Thus, we may assume that $\Lam_1 + \Lam_j$ is dense for all $2 \leq j \leq g$, so that $L_j$ is dense in $\Om_{a-a_j}\cM_{g-1}(1^{2g-4})$ for all $2 \leq j \leq g$. By iterating the argument from the previous paragraph, we obtain the following. Let $\Del$ be the open simplex of tuples $(b_1,\dots,b_g)$ with $b_j > 0$ for all $j$ such that $\sum_{j=1}^g b_j = a$. Consider the equivalence relation $\sim$ on $\Del$ generated by $(b_1,\dots,b_g) \sim (b_1^\pr,\dots,b_g^\pr)$ whenever $b_j^\pr = b_j$ for some $2 \leq j \leq g$. For any $(b_1,\dots,b_g) \in \Del$ in the equivalence class of $(a_1,\dots,a_g)$, and for any star-shaped connected sum $(Y,\eta) \in \Om_a\cM_g(1^{2g-2})$ whose associated tori have areas $b_1,\dots,b_g$, the closure of $L$ contains $(Y,\eta)$.

Fix $(b_1,\dots,b_g) \in \Del$. Since $g \geq 4$, for $\eps > 0$ sufficiently small we have
\begin{align*}
(a_1,\dots,a_g) &\sim (\eps,\dots,\eps, a_1 + \cdots + a_{g-1} - (g-2)\eps, a_g) \sim (\eps,\dots,\eps, a - (g-1)\eps) \\
(b_1,\dots,b_g) &\sim (\eps,\dots,\eps, b_1 + \cdots + b_{g-1} - (g-2)\eps, b_g) \sim (\eps,\dots,\eps, a - (g-1)\eps)
\end{align*}
so $(a_1,\dots,a_g) \sim (b_1,\dots,b_g)$. Thus, every star-shaped connected sum in $\Om_a\cM_g(1^{2g-2})$ lies in the closure of $L$. Since the action of $\SL(2,\R)$ on $\Om_a\cM_g(1^{2g-2})$ is ergodic, the set of star-shaped connected sums in $\Om_a\cM_g(1^{2g-2})$ is dense, thus $L$ is dense in $\Om_a\cM_g(1^{2g-2})$. \\

\paragraph{\bf Case 2.} Suppose that $\Per(\om)$ is not dense. Since $\Per(\om)$ contains a lattice and is not discrete, the closure of $\Per(\om)$ is given by $\ell + \Z w$ for some line $\ell$ through $0$ and some $w \notin \ell$. By applying an element of $\SL(2,\R)$ to $(X,\om)$, we may assume that the closure of $\Per(\om)$ is $\R + i\Z$. Then $\R + \Per(\om) = \R + i\Z$, and for $1 \leq j \leq g$, we have $\R + \Lam_j = \R + im_j\Z$ for some positive integer $m_j$. Moreover, $\gcd_j m_j = 1$. The action of the matrix $u_t = \begin{pmatrix}1 & t \\ 0 & 1\end{pmatrix}$ on $\C$ preserves the subgroup $\R + i n \Z$, and the action of $u_t$ on $\Om\cM_g$ preserves $\Om_a^{\R + i n \Z}\cM_g$, for all $n > 0$.

Since $\Per(\om_g)$ is not discrete, the closure of $\Per(\om_g)$ is given by $\R + im\Z$ for some positive integer $m$. By induction, the closure of $L_g$ is $\Om_{a-a_g}^{\R+im\Z}\cM_{g-1}(1^{2g-4})$. Following the argument in the first paragraph of Case 1, we may arbitrarily modify $(X_g,\om_g)$ within $\Om_{a-a_g}^{\R+im\Z}\cM_{g-1}(1^{2g-4})$ (keeping the slits very short and very far apart relative to their length), while staying in the closure of $L$. By applying $u_t$ to $\Lam_1$ for some small $t \in \R$, we can arrange that $\Lam_1 + \Lam_j$ is not discrete for all $2 \leq j \leq g$. Then we can iteratively apply the argument from the first paragraph of Case 1, and the result is the following. Let $\cP$ be the set of tuples of positive integers $(m_1,\dots,m_g)$ such that $\gcd_j m_j = 1$. Consider the equivalence relation on $\Del \times \cP$ generated by
\be
((b_1,\dots,b_g),(n_1,\dots,n_g)) \sim ((b_1^\pr,\dots,b_g^\pr),(n_1^\pr,\dots,n_g^\pr))
\ee
whenever $b_j^\pr = b_j$, $n_j^\pr = n_j$, and $\gcd_{k \neq j} n_k^\pr = \gcd_{k \neq j} n_k$ for some $2 \leq j \leq g$. For any $((b_1,\dots,b_g),(n_1,\dots,n_g)) \in \Del \times \cP$ in the equivalence class of $((a_1,\dots,a_g),(m_1,\dots,m_g))$, and for any star-shaped connected sum $(Y,\eta) \in \Om_a^{\R+i\Z}\cM_g(1^{2g-2})$ whose associated tori have areas $b_1,\dots,b_g$ and periods $\Lam_j$ with $\R + \Lam_j = \R + in_j\Z$, the closure of $L$ contains $(Y,\eta)$.

By the last paragraph of Case 1, equivalence classes in $\Del \times \cP$ are unions of fibers of the projection $\Del \times \cP \ra \cP$. Therefore, it is enough to show that the induced equivalence relation $\sim_\cP$ on $\cP$ has only one equivalence class. Let $m_j^\pr = \gcd_{k \neq j} m_k$ for $2 \leq j \leq g$, and note that $\gcd(m_j^\pr,m_j) = 1$. Since $g \geq 4$, we have
\be
(m_1,\dots,m_g) \sim_\cP (m_g^\pr,\dots,m_g^\pr,m_g) \sim_\cP (1,\dots,1,m_g^\pr,1) \sim_\cP (1,\dots,1) .
\ee
Therefore, $\sim_\cP$ has only one equivalence class, thus $L$ is dense in $\Om_a^{\R+i\Z}\cM_g(1^{2g-2})$.
\end{proof}

\begin{rmk}
Our proof of Theorem \ref{thm:closures} crucially relies on the genus $2$ case as a base case. However, it is not necessary to rely on the genus $3$ case. In the genus $3$ case, one must address the possibility of $(X_3,\om_3)$ being a nonarithmetic eigenform for real multiplication (see \cite{McM:isoperiodic}). In that case, the closure of $L_3$ contains the $\SL(2,\R)$-orbit of $(X_3,\om_3)$, and by applying $\SL(2,\R)$ only to $(X_3,\om_3)$, one can arrange that $(X_2,\om_2)$ has a dense group of absolute periods and is not an eigenform. The rest of the argument is then similar to Case 1 of the proof of Theorem \ref{thm:closures}.
\end{rmk}


\section{Background on orbit closures} \label{sec:orbit}

Before proving Theorem \ref{thm:sat}, we recall some of the structure theory of orbit closures for the action of $\GL^+(2,\R)$ on strata of holomorphic $1$-forms. Throughout, $\cM$ denotes an orbit closure in a stratum, and $g$ is the genus of the underlying surfaces.

By Theorem 2.1 in \cite{EMM:closures}, $\cM$ is a properly immersed affine suborbifold of its stratum, and is defined in local period coordinates by homogeneous linear equations with real coefficients. In general, $\cM$ may have self-intersection points, and by Theorem 2.2 in \cite{EMM:closures}, the self-intersection points form a finite union of smaller orbit closures.

The {\em field of definition} $k(\cM)$ is the smallest subfield $K \subset \R$ such that $\cM$ is defined in local period coordinates by homogeneous linear equations with coefficients in $K$. By Theorem 1.1 in \cite{Wri:field}, $k(\cM)$ is a number field of degree at most $g$. By Theorem 1.9 in \cite{Wri:cylinder}, if $k(\cM) \neq \Q$ and $(X,\om) \in \cM$ has at least one horizontal cylinder, then there exist two horizontal cylinders on $(X,\om)$ whose circumferences have an irrational ratio.

Away from the self-intersection points of $\cM$, the tangent space $T_{(X,\om)}\cM$ is naturally a complex subspace of the relative cohomology group $H^1(X,Z(\om);\C)$. The {\em real tangent space} is defined by
\be
T^\R_{(X,\om)} \cM = T_{(X,\om)}\cM \cap H^1(X,Z(\om);\R) .
\ee
Let $p$ be the projection from relative cohomology to absolute cohomology. By Theorem 1.4 in \cite{AEM:symplectic}, $p(T^\R_{(X,\om)}\cM)$ is a symplectic subspace of $H^1(X;\R)$, thus $p(T_{(X,\om)}\cM)$ has even complex dimension. The {\em rank} of $\cM$ is defined by
\be
\rank(\cM) = \frac{1}{2} \dim_\C p(T_{(X,\om)}\cM)
\ee
and is an integer satisfying $1 \leq \rank(\cM) \leq g$.

A holomorphic $1$-form is {\em horizontally periodic} if the underlying surface is a union of horizontal cylinders and horizontal saddle connections. By Corollary 6 in \cite{SW:minimal}, $\cM$ contains a horizontally periodic holomorphic $1$-form. For $(X,\om) \in \cM$ horizontally periodic, the {\em cylinder preserving space}
\be
\Pres_{(X,\om)}\cM \subset T^\R_{(X,\om)}\cM
\ee
is the real subspace of relative cohomology classes that evaluate to zero on every horizontal closed geodesic in $X \sm Z(\om)$, and the {\em twist space}
\be
\Tw_{(X,\om)}\cM \subset \Pres_{(X,\om)}\cM
\ee
is the real subspace of relative cohomology classes that evaluate to zero on every horizontal saddle connection in $X$. By Lemma 8.6 in \cite{Wri:cylinder}, if $\Tw_{(X,\om)}\cM$ is not equal to $\Pres_{(X,\om)}\cM$, then there is a horizontally periodic holomorphic $1$-form in $\cM$ with more horizontal cylinders than $(X,\om)$. By Lemma 8.10 in \cite{Wri:cylinder}, $p(\Tw_{(X,\om)}\cM)$ is an isotropic subspace of $H^1(X;\R)$, thus
\be
\rank(\cM) \geq \dim_\R p(\Tw_{(X,\om)}\cM) .
\ee
Lastly, by Corollary 8.11 in \cite{Wri:cylinder}, $\rank(\cM)$ is bounded above by the codimension of $\Tw_{(X,\om)}\cM$ in $T^\R_{(X,\om)}\cM$.


\section{Saturated orbit closures} \label{sec:sat}

We now prove Theorem \ref{thm:sat}. Since the principal stratum $\Om\cM_g(1^{2g-2})$ is open and dense in $\Om\cM_g$, and since every leaf of the absolute period foliation of $\Om\cM_g$ intersects the principal stratum, we may restrict our attention to the action of $\GL^+(2,\R)$ on the principal stratum. Throughout this section, unless explicitly stated otherwise, $g \geq 3$ is fixed and $\cM$ is an orbit closure for the action of $\GL^+(2,\R)$ on the principal stratum that is saturated for the absolute period foliation of the principal stratum.

Our approach to Theorem \ref{thm:sat} centers around deformations of flat pairs of pants consisting of horizontal cylinders. We begin by explaining how these pairs of pants arise. Suppose $(X,\om) \in \cM$ is horizontally periodic, and fix $Z \in Z(\om)$. The cone angle around $Z$ is $4\pi$, so there are exactly $2$ horizontal saddle connections $\gam_1^Z,\gam_2^Z$ emanating rightward from $Z$. Suppose that $\gam_1^Z$ and $\gam_2^Z$ are loops. Let $\gam_{j,\ell}^Z,\gam_{j,r}^Z$ be the left and rights ends of $\gam_j^Z$, respectively. Rotating counterclockwise through an angle of $\pi$ around $Z$ determines a permutation on the $4$ ends $\gam_{1,\ell}^Z,\gam_{1,r}^Z,\gam_{2,\ell}^Z,\gam_{2,r}^Z$. This permutation is a $4$-cycle that exchanges the subsets $\{\gam_{1,\ell}^Z,\gam_{2,\ell}^Z\}$ and $\{\gam_{1,r}^Z,\gam_{2,r}^Z\}$. There are only $2$ possible permutations. In the first case, the counterclockwise angle from $\gam_{1,r}^Z$ to $\gam_{1,\ell}^Z$ is $\pi$. Each $\gam_j^Z$ is the unique horizontal saddle connection in the top boundary of a horizontal cylinder $C_j^Z$, and $\gam_1^Z \cup \gam_2^Z$ is the bottom boundary of a horizontal cylinder $C_0^Z$. In the second case, the counterclockwise angle from $\gam_{1,\ell}^Z$ to $\gam_{1,r}^Z$ is $\pi$. Each $\gam_j^Z$ is the unique horizontal saddle connection in the bottom boundary of a horizontal cylinder $C_j^Z$, and $\gam_1^Z \cup \gam_2^Z$ is the top boundary of a horizontal cylinder $C_0^Z$. In either case, $Z$ is only contained in one boundary component of each of the cylinders $C_j^Z$. Choosing closed geodesics $\al_j^Z \subset C_j^Z$, the closure of the component of $X \sm (\al_0^Z \cup \al_1^Z \cup \al_2^Z)$ containing $Z$ is a pair of pants. See Figure \ref{fig:fpp}.

\begin{figure}
    \centering
    \includegraphics[width=0.8\textwidth]{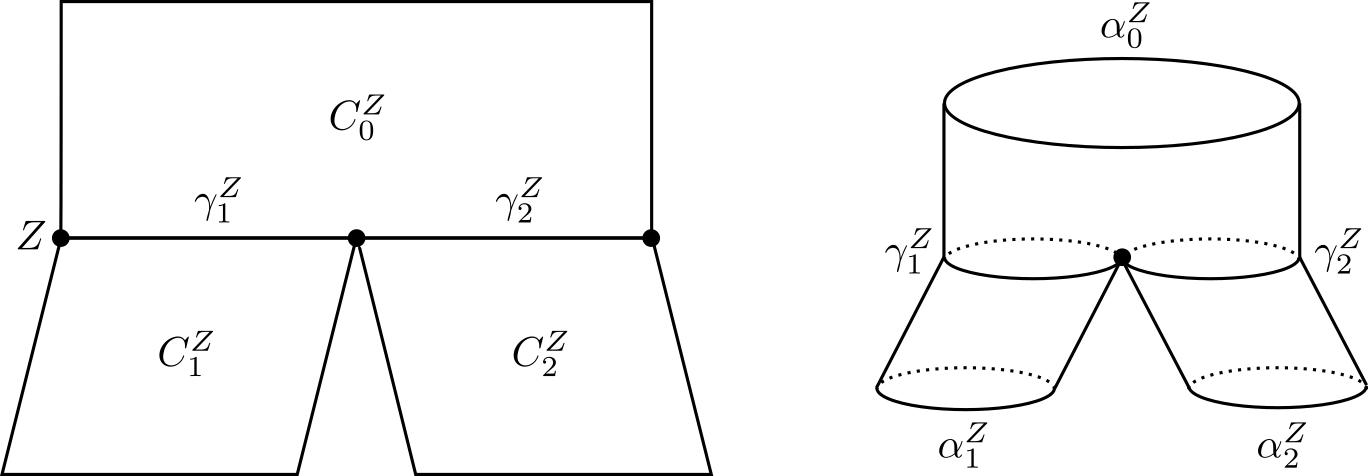}
    \caption{An example of a flat pair of pants.}
    \label{fig:fpp}
\end{figure}

\begin{lem} \label{lem:pop}
There is a horizontally periodic $(X,\om) \in \cM$ with $3g-3$ horizontal cylinders. Every horizontal saddle connection on $(X,\om)$ is a loop.
\end{lem}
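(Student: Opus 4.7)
The plan is to push a horizontally periodic form in $\cM$ up to a configuration where the twist space equals the cylinder-preserving space via Lemma 8.6 of \cite{Wri:cylinder}, and then to exploit the saturation hypothesis to convert this equality into the statement that every horizontal saddle connection is a loop. Once the saddle connections are all loops, the flat pair of pants picture described before the lemma forces exactly $3g-3$ horizontal cylinders.

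First I would use Corollary 6 of \cite{SW:minimal} to produce some horizontally periodic $(X_0,\om_0) \in \cM$ and then iteratively apply Lemma 8.6 of \cite{Wri:cylinder}: whenever the current form $(X_i,\om_i)$ satisfies $\Tw_{(X_i,\om_i)}\cM \neq \Pres_{(X_i,\om_i)}\cM$, I replace it by a horizontally periodic element of $\cM$ with strictly more horizontal cylinders. Since the waist curves of distinct horizontal cylinders are disjoint and pairwise non-isotopic essential simple closed curves on a closed surface of genus $g$, the number of horizontal cylinders is bounded by $3g-3$, so the iteration must terminate at some horizontally periodic $(X,\om) \in \cM$ with $\Tw_{(X,\om)}\cM = \Pres_{(X,\om)}\cM$.

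Next I would bring in saturation. At any point of the principal stratum, the tangent space to a leaf of the absolute period foliation is the kernel of the projection $p : H^1(X,Z(\om);\C) \ra H^1(X;\C)$, so saturation gives $\ker(p) \subset T_{(X,\om)}\cM$ and hence $\ker(p) \cap H^1(X,Z(\om);\R) \subset T^\R_{(X,\om)}\cM$. Every element of this real kernel vanishes on every absolute cycle, in particular on every horizontal cylinder core, so it lies in $\Pres_{(X,\om)}\cM = \Tw_{(X,\om)}\cM$, and hence also vanishes on every horizontal saddle connection. Since the real kernel of $p$ is precisely the annihilator of the image of $H_1(X;\R) \ra H_1(X,Z(\om);\R)$, vanishing of every such element on the class of a horizontal saddle connection $\gam$ forces $[\gam]$ to lie in that image; equivalently, $\partial[\gam] = 0$ in $H_0(Z(\om);\R)$, so $\gam$ is a loop.

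With every horizontal saddle connection a loop, the two rightward and two leftward horizontal rays at each zero $Z$ close up into the pair of loops at $Z$ described before the statement, producing a flat pair of pants containing $Z$. This yields $2g-2$ pants, and matching their $3(2g-2)$ boundary circles in pairs with the boundary circles of the horizontal cylinders gives $3(2g-2)/2 = 3g-3$ cylinders. I expect the main subtlety will be verifying cleanly that Lemma 8.6 of \cite{Wri:cylinder} keeps us inside the principal-stratum restriction of $\cM$ throughout the iteration, so that the saturation hypothesis remains available at the terminal point; the relative-to-absolute duality in the saturation step is then a routine linear-algebra fact.
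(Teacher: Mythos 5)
Your proof is correct, but it takes a genuinely different route from the paper's. The paper produces the desired $(X,\om)$ by an explicit surgery: starting from any horizontally periodic form given by Corollary~6 of \cite{SW:minimal}, it picks a zero $Z$ lying on a horizontal saddle connection that is not a loop, slits a short pair of vertical twin segments emanating upward from $Z$, and reglues (a small Rel move that stays in the leaf, hence in $\cM$ by saturation). This strictly increases the number of zeros $Z$ whose two horizontal saddle connections are both loops, and iterating terminates at a form with all horizontal saddle connections loops. You instead iterate Lemma~8.6 of \cite{Wri:cylinder} to drive up the cylinder count until $\Tw_{(X,\om)}\cM = \Pres_{(X,\om)}\cM$, then use saturation to get $\ker(p)\cap H^1(X,Z(\om);\R) \subset \Pres_{(X,\om)}\cM = \Tw_{(X,\om)}\cM$, so every element of $\ker(p)^\R$ kills every horizontal saddle connection class; since $\ker(p)^\R$ is the annihilator of $\im\bigl(H_1(X;\R)\ra H_1(X,Z(\om);\R)\bigr)$, the double-annihilator identity forces $\partial[\gam]=0$ for each horizontal saddle connection, i.e.\ each is a loop. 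This is a clean conceptual shortcut: it converts the saturation hypothesis directly into the condition you need via linear-algebraic duality, without any explicit flat surgery, and it makes transparent why the maximal cylinder count coincides with ``all horizontal saddle connections are loops.'' What you lose is that the paper's slit-and-reglue construction is not just a device for this lemma: it is the prototype of the ${\rm Sch}_{\varphi_1}$ surgery that is generalized and used heavily in the proofs of Theorems~\ref{thm:rank2} and~\ref{thm:free}, so the paper benefits from introducing it concretely here. One small point worth making explicit in your write-up: after arriving at $\Tw = \Pres$ you first use the a~priori bound of $3g-3$ on the number of disjoint nonisotopic essential simple closed curves to guarantee termination of the iteration, and only afterwards does the loop condition pin down the count to exactly $3g-3$ via the incidence count $3(2g-2) = 2|V(\Gam)|$; these are two separate uses of $3g-3$ and it is worth distinguishing them.
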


\begin{proof}
By Corollary 6 in \cite{SW:minimal}, there is a horizontally periodic $(X_0,\om_0) \in \cM$. Suppose there is a horizontal saddle connection $\gam$ on $(X_0,\om_0)$ with distinct endpoints, and let $Z$ be one of the endpoints of $\gam$. Recall that the cone angle around $Z$ is $4\pi$. Fix $\eps > 0$ smaller than the minimum height of a horizontal cylinder on $(X_0,\om_0)$, and let $s_1,s_2$ be the two upward segments of length $\eps$ emanating from $Z$. Let $C_j$ be the horizontal cylinder containing the interior of $s_j$, and let $w_j$ be the circumference of $C_j$. By slitting $(X_0,\om_0)$ along $s_1$ and $s_2$ and regluing opposite sides, we obtain a horizontally periodic holomorphic $1$-form $(X_1,\om_1)$ on which both horizontal saddle connections through $Z$ are loops. The heights of $C_1$ and $C_2$ decrease by $\eps$. If $C_1 \neq C_2$, one new horizontal cylinder of height $\eps$ and circumference $w_1 + w_2$ is formed, and if $C_1 = C_2$, two new horizontal cylinders of height $\eps$ are formed and their circumferences sum to $w_1$. This surgery does not change the absolute periods of $\om$. See Figure \ref{fig:rel}.

Every horizontal saddle connection disjoint from $Z$ is preserved in the process. Thus, we have increased the number of zeros $Z_0$ with the property that both horizontal saddle connections through $Z_0$ are loops. By iterating this procedure, we can perturb $(X_0,\om_0)$ along the associated leaf of the absolute period foliation to get a horizontally periodic holomorphic $1$-form $(X,\om)$ on which every horizontal saddle connection is a loop. Since $\cM$ is saturated for the absolute period foliation, $(X,\om) \in \cM$.

By the discussion preceding the lemma, each zero of $\om$ lies in the boundary of exactly $3$ distinct horizontal cylinders on $(X,\om)$, and each horizontal cylinder contains exactly $2$ distinct zeros in its boundary (one zero in the top boundary and one zero in the bottom boundary). Since there are $2g-2$ zeros, there are $3g-3$ horizontal cylinders.
\end{proof}

\begin{figure}
    \centering
    \includegraphics[width=0.4\textwidth]{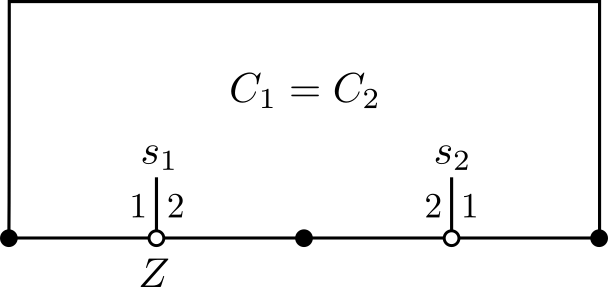}
    \caption{An illustration of the surgery in the proof of Lemma \ref{lem:pop} in the case $C_1 = C_2$.}
    \label{fig:rel}
\end{figure}

We fix some notation that we will use for the rest of the section. Fix a horizontally periodic $(X,\om) \in \cM$ with $3g-3$ horizontal cylinders. For each zero $Z \in Z(\om)$, we denote by $\gam_1^Z,\gam_2^Z$ the $2$ horizontal saddle connections through $Z$, and we denote by $C_0^Z,C_1^Z,C_2^Z$ the $3$ horizontal cylinders containing $Z$ in their boundary, chosen so that the circumferences $w_0^Z,w_1^Z,w_2^Z$ satisfy
\be
w_0^Z = w_1^Z + w_2^Z
\ee
and so that $\gam_1^Z$ is a boundary component of $C_1^Z$ and $\gam_2^Z$ is a boundary component of $C_2^Z$. Then $\gam_1^Z \cup \gam_2^Z$ is a boundary component of $C_0^Z$. Here, the choice of $C_1^Z,C_2^Z$ is arbitrary. We denote the heights of these cylinders by $h_0^Z,h_1^Z,h_2^Z$.

Since $\cM$ is defined by homogeneous linear equations in local period coordinates, a tangent vector $\eta \in T_{(X,\om)}\cM \subset H^1(X,Z(\om);\C)$ determines a path $(X,\om) + t\eta$ in $\cM$, which in general is only defined for $t \in \R$ sufficiently small. For a horizontal cylinder $C$ on $(X,\om)$, the {\em cylinder shear} $\eta_C \in \Tw_{(X,\om)}\cM$ is the relative cohomology class that evaluates to the height of $C$ on any saddle connection $\gam \subset C \cup Z(\om)$ that crosses $C$ from bottom to top, and evaluates to $0$ on any path disjoint from $C$ and on any closed loop contained in $C$. In this case, the path $(X,\om) + t\eta_C$ is well-defined for all $t \in \R$ and projects to a loop in $\cM$ with period $w/h$, where $h$ and $w$ are the height and circumference of $C$. On $(X,\om) + t\eta_C$, the integral along $\gam$ is given by $\int_\gam \om + th$. The {\em standard shear} $\eta_\om \in T_{(X,\om)}\cM$ is the sum of all of the cylinder shears $\eta_C$ and is tangent to the horocycle through $(X,\om)$.

The shear $\eta_Z = \sum_{j=0}^2 \frac{1}{h_j^Z} \eta_{C_j^Z}$ lies in $\ker(p) \cap \Tw_{(X,\om)}\cM$, and the associated path $(X,\om) + t \eta_Z$ is contained in the leaf of the absolute period foliation through $(X,\om)$. Along this path, $C_0^Z$ is twisted by some amount while $C_1^Z$ and $C_2^Z$ are twisted by that same amount in the opposite direction. See Figure \ref{fig:shear}. This path can be viewed as an orbit of the linear flow $v \mapsto v + (t,-t,-t)$ in the torus $\prod_{j=0}^2 \R/w_j^Z\Z$. We then have the following standard lemma (see Section 3 in \cite{Wri:cylinder}).

\begin{lem} \label{lem:reltwist}
Fix a horizontally periodic $(X,\om) \in \cM$ with $3g-3$ horizontal cylinders, and fix $Z \in Z(\om)$. Let $1 \leq d \leq 3$ be the $\Q$-dimension of the $\Q$-span of the reciprocals $\frac{1}{w_0^Z},\frac{1}{w_1^Z},\frac{1}{w_2^Z}$. Then $\Tw_{(X,\om)}\cM \cap {\rm span}_\R (\eta_{C_0^Z},\eta_{C_1^Z},\eta_{C_2^Z})$ has dimension at least $d$.
\end{lem}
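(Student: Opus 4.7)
My plan is to identify a $d$-dimensional subspace of $\Tw_{(X,\om)}\cM\cap W$, where $W = {\rm span}_\R(\eta_{C_0^Z},\eta_{C_1^Z},\eta_{C_2^Z})$, as the tangent space at $(X,\om)$ of the closure of the $\eta_Z$-orbit, realized inside the natural torus of shearings of the three cylinders $C_0^Z,C_1^Z,C_2^Z$. First I would introduce the cylinder-shearing map
\be
\sigma : T_W := \prod_{j=0}^{2} \R/w_j^Z\Z \longrightarrow \Om\cM_g(1^{2g-2}),
\ee
sending $(x_0,x_1,x_2)$ to the flat surface obtained from $(X,\om)$ by shearing $C_j^Z$ horizontally by $x_j$. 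Shearing a cylinder by its circumference returns to the original surface, so $\sigma$ descends to the compact torus $T_W$. Its differential at the origin sends the standard basis vector $e_j$ to $\frac{1}{h_j^Z}\eta_{C_j^Z}$, so $\sigma$ is a smooth immersion.

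The $\eta_Z$-flow is the image of the diagonal line: $\sigma(t,t,t) = (X,\om) + t\eta_Z$. Since $\eta_Z \in \ker(p)$ and $\cM$ is saturated for the absolute period foliation, this orbit lies in $\cM$. To compute the closure of the diagonal in $T_W$, I would change coordinates by $y_j = x_j/w_j^Z$, identifying $T_W$ with $(\R/\Z)^3$ and rescaling the diagonal to the line $\R\cdot(1/w_0^Z,1/w_1^Z,1/w_2^Z)$. By Kronecker's theorem, the closure of this line in $(\R/\Z)^3$ is a subtorus of dimension equal to $\dim_\Q {\rm span}_\Q(1/w_0^Z,1/w_1^Z,1/w_2^Z) = d$. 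Let $T' \subset T_W$ be the corresponding subtorus. Because $T_W$ is compact, $\sigma$ is a closed map, so $\sigma(T')$ equals the closure in the stratum of the $\eta_Z$-orbit; since $\cM$ is closed in its stratum and contains that orbit, $\sigma(T') \subset \cM$.

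Finally, because $\sigma$ is an immersion, the tangent space to $\sigma(T')$ at $(X,\om) = \sigma(0)$ is $d\sigma_0(T_0 T')$, a $d$-dimensional real subspace of $W$ contained in $T^\R_{(X,\om)}\cM$. Every element of $W$ evaluates to $0$ on every horizontal saddle connection (which lies in a cylinder boundary and crosses no cylinder from bottom to top) and on every horizontal closed geodesic (a core curve or a concatenation of horizontal saddle connections), so $d\sigma_0(T_0 T') \subset \Tw_{(X,\om)}\cM \cap W$, proving the lemma. The step I expect to require the most care is verifying that $\sigma$ genuinely descends to a smooth immersion of the compact torus $T_W$ that is a closed map, so that the Kronecker dimension count inside $T_W$ transfers faithfully to a statement about tangent vectors in $\cM$ at $(X,\om)$.
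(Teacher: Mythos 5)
Your proof is correct and follows essentially the same route as the paper. The paper identifies the $\eta_Z$-path with a linear flow on the torus $\prod_{j=0}^2 \R/w_j^Z\Z$ and cites Section 3 of \cite{Wri:cylinder} for the resulting "standard lemma"; your write-up simply supplies the Kronecker closure argument and the saturation step that the paper leaves implicit (the $(t,t,t)$ versus $(t,-t,-t)$ discrepancy is only a sign convention in the bottom-to-top orientation of the cylinder shears and does not affect the $\Q$-dimension count).
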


If $\cM$ has rank $1$ and field of definition $\Q$, then for any $(X,\om) \in \cM$, since $p(T_{(X,\om)}\cM)$ has dimension $2$, any absolute period of $\om$ can be expressed as a $\Q$-linear combination of $2$ fixed absolute periods. Then $\Per(\om)$ is a lattice in $\C$, and the map
\be
X \ra \C / \Per(\om), \quad x \mapsto \int_{x_0}^x \om
\ee
exhibits $X$ as a branched cover of a torus. Thus, $\cM$ is a locus of torus covers. The remaining case in rank $1$ is easy to rule out, as follows.

\begin{figure}
    \centering
    \includegraphics[width=\textwidth]{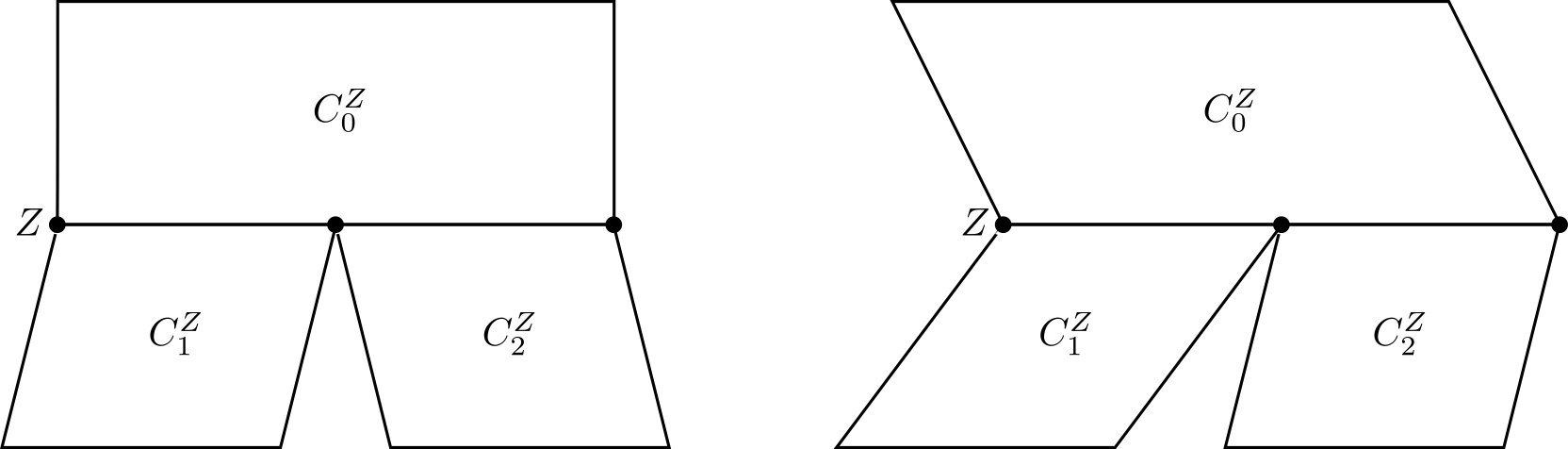}
    \caption{An illustration of the path $(X,\om) + t\eta_Z$, showing only the horizontal cylinders being sheared.}
    \label{fig:shear}
\end{figure}

\begin{lem} \label{lem:rank2}
If $k(\cM) \neq \Q$, then $\rank(\cM) \geq 2$.
\end{lem}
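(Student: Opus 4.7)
The plan is to argue by contradiction, assuming $\rank(\cM) = 1$ and deriving a contradiction from $k(\cM) \neq \Q$ via Theorem 1.9 of \cite{Wri:cylinder}. By Lemma~\ref{lem:pop}, I fix a horizontally periodic $(X,\om) \in \cM$ with $3g-3$ cylinders, all of whose saddle connections are loops. Two basic observations drive the rest of the argument: first, because every saddle connection is a loop, the defining condition for $\Tw$ is automatic on $\ker p$, so $\ker p \subset \Tw_{(X,\om)}\cM$; second, $\rank(\cM) = 1$ forces $p(T^\R_{(X,\om)}\cM) = \R\re[\om] + \R\im[\om]$. Since any element of $\Tw_{(X,\om)}\cM$ can be shown (using $\ker p \subset \Tw$ together with $\eta_\om$) to lie in the real span of the cylinder shears $\eta_C$, its $p$-image vanishes on horizontal cycles; intersecting $L_h^\perp$ with $\R\re[\om] + \R\im[\om]$ and noting that $\re[\om]$ is nonzero on cylinder cores pins $p(\Tw_{(X,\om)}\cM)$ inside $\R\im[\om]$.

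Fix a zero $Z$ and consider $V_Z := \mathrm{span}_\R(\eta_{C_0^Z}, \eta_{C_1^Z}, \eta_{C_2^Z})$. An element $\sum_j a_j \eta_{C_j^Z} \in V_Z$ lies in $\Tw_{(X,\om)}\cM$ exactly when its $p$-image is $\mu\,\im[\om]$ for some $\mu \in \R$, which rearranges to the homological equation
\[
\sum_{j=0}^2 (a_j - \mu) h_j^Z \al_j^Z \;=\; \mu \sum_{C \notin \{C_0^Z, C_1^Z, C_2^Z\}} h_C\, \al_C
\]
in $H_1(X;\R)$. When the right-hand side does not lie in $\mathrm{span}_\R(\al_0^Z, \al_1^Z, \al_2^Z)$ (a $2$-dimensional subspace thanks to the pair-of-pants relation $\al_0^Z + \al_1^Z + \al_2^Z = 0$), the only solutions have $\mu = 0$ and $(a_j) \propto (1/h_j^Z)$, so $\Tw_{(X,\om)}\cM \cap V_Z = \R\eta_Z$. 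By Lemma~\ref{lem:reltwist}, this forces $d_Z = 1$, i.e., $w_0^Z, w_1^Z, w_2^Z$ are $\Q$-rationally commensurable.

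If $d_Z = 1$ at every zero $Z$, I finish via connectivity: the incidence graph with cylinders as vertices and each zero contributing a triangle among its three adjacent cylinders is connected because $X$ is, and the local commensurabilities propagate to force all $3g - 3$ horizontal cylinder circumferences of $(X,\om)$ into a common $1$-dimensional $\Q$-subspace of $\R$. This directly contradicts Theorem 1.9 of \cite{Wri:cylinder}, which guarantees two horizontal cylinders with irrational circumference ratio whenever $k(\cM) \neq \Q$.

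The main technical obstacle is verifying the genericity condition $\sum_{C \notin \{C_0^Z, C_1^Z, C_2^Z\}} h_C\, \al_C \notin \mathrm{span}_\R(\al_0^Z, \al_1^Z, \al_2^Z)$ simultaneously at every zero $Z$. I would address this by producing a suitable $(X,\om) \in \cM$ where the condition holds globally, perturbing along leaves of the absolute period foliation (using the saturation hypothesis) and reapplying the Lemma~\ref{lem:pop} surgery to change the cylinder decomposition as necessary; failing this, the degenerate case at some $Z_0$ imposes an additional homological relation among cylinder cores that can be combined with the connectivity propagation at the remaining zeros to still force all circumferences into a common $\Q$-line of $\R$, yielding the same contradiction.
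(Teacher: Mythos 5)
The paper's proof is direct and much shorter than yours. It invokes Theorem~1.9 of \cite{Wri:cylinder} to produce two horizontal cylinders on $(X,\om)$ with irrational circumference ratio, and then observes — using that the cylinder digraph is strongly connected and that $\Q$ is a field — that two such cylinders can be taken to be \emph{adjacent}, i.e.\ sharing a boundary saddle connection, which is a loop through a single zero $Z$. At that one zero, $w_0^Z/w_1^Z \notin \Q$, so Lemma~\ref{lem:reltwist} gives $\dim W \geq 2$ for $W = \Tw_{(X,\om)}\cM \cap \mathrm{span}_\R(\eta_{C_0^Z},\eta_{C_1^Z},\eta_{C_2^Z})$; since $\ker p$ meets this span only in $\R\eta_Z$, one gets $\dim p(W) \geq 1$, and combining with the standard shear $\eta_\om \in \Tw_{(X,\om)}\cM$ (whose $p$-image is independent of $p(W)$ because $3g-3 > 3$) yields $\dim_\R p(\Tw_{(X,\om)}\cM) \geq 2$, hence $\rank(\cM) \geq 2$. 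Your proposal instead runs the contrapositive — assume $\rank(\cM) = 1$, derive $d_Z = 1$ at every zero, propagate commensurability across the cylinder incidence graph, and contradict Theorem~1.9.

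The gap you flag is genuine and is not resolved. Your deduction $\Tw_{(X,\om)}\cM \cap V_Z = \R\eta_Z$, hence $d_Z \leq 1$, hinges on the genericity hypothesis $\sum_{C \notin \{C_0^Z,C_1^Z,C_2^Z\}} h_C[\al_C] \notin \mathrm{span}_\R([\al_0^Z],[\al_1^Z],[\al_2^Z])$, and your propagation argument needs this at \emph{every} zero simultaneously. The fallback sketch ("the degenerate case still forces commensurability") does not work as stated: when genericity fails at some $Z_0$, the intersection $\Tw_{(X,\om)}\cM \cap V_{Z_0}$ may have dimension $2$ or $3$, and Lemma~\ref{lem:reltwist} gives only $\dim \geq d_{Z_0}$, so you lose any upper bound on $d_{Z_0}$ and the commensurability conclusion at $Z_0$ disappears, potentially disconnecting the propagation. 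The key observation you are missing is that Theorem~1.9 can be upgraded to produce an \emph{adjacent} pair of cylinders with irrational circumference ratio; once the irrational ratio sits at a single zero $Z$, you have $d_Z \geq 2$ immediately, and there is no need for propagation nor for genericity at all of the other $2g-3$ zeros. If you then follow the paper and use $\eta_\om$ rather than characterizing $p(\Tw)$ via the condition $p(\eta) \in \R\,\mathrm{Im}[\om]$, the remaining claim to verify is only that $p(\eta_\om) \notin p(W)$ at that one zero, which is weaker than the genericity condition you pose.
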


\begin{proof}
By Lemma \ref{lem:pop}, there is a horizontally periodic $(X,\om) \in \cM$ with $3g-3$ horizontal cylinders. By Theorem 1.9 in \cite{Wri:cylinder}, there are two horizontal cylinders on $(X,\om)$ whose circumferences have an irrational ratio. It follows that there are two such horizontal cylinders that share a horizontal saddle connection in their boundaries. This saddle connection is a loop through some zero $Z \in Z(\om)$, and these cylinders are given by $C_0^Z,C_1^Z$ for some $Z \in Z(\om)$, possibly after swapping $C_1^Z$ and $C_2^Z$. Since $w_0^Z/w_1^Z \notin \Q$, Lemma \ref{lem:reltwist} tells us that the intersection
\be
W = \Tw_{(X,\om)}\cM \cap {\rm span}_\R (\eta_{C_0^Z},\eta_{C_1^Z},\eta_{C_2^Z})
\ee
has dimension at least $2$. Since $\ker(p) \cap {\rm span}_\R(\eta_{C_0^Z},\eta_{C_1^Z},\eta_{C_2^Z})$ is $1$-dimensional, the dimension of $p(W)$ is at least $1$. Additionally, $\Tw_{(X,\om)}\cM$ contains the standard shear $\eta_\om$, and since $(X,\om)$ has $3g-3 > 3$ horizontal cylinders, $\eta_\om \notin W + \ker(p)$. Thus, the dimension of $p(\Tw_{(X,\om)}\cM)$ is at least $2$, hence $\rank(\cM) \geq 2$.
\end{proof}

In light of Lemma \ref{lem:rank2} and the preceding remark about torus covers, to prove Theorem \ref{thm:sat} it is enough to consider the case of rank at least $2$. In this case, we will be interested in finding flat pairs of pants such that the reciprocals of the associated cylinder circumferences are linearly independent over $\Q$. The following lemma will allow us to find such a flat pair of pants.

\begin{lem} \label{lem:ratiocircum}
Let $\cM$ be an arbitrary orbit closure in a stratum with $\rank(\cM) \geq 2$. Fix a horizontally periodic $(X,\om) \in \cM$ with $\Tw_{(X,\om)}\cM = \Pres_{(X,\om)}\cM$. There exists a nearby horizontally periodic $(X^\pr,\om^\pr) \in \cM$ and $2$ horizontal cylinders on $(X^\pr,\om^\pr)$ whose circumferences have a ratio that is not in $\ol{\Q}$.
\end{lem}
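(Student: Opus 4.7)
My plan is to deform $(X,\om)$ inside $\cM$ along a real-cohomology direction $\eta \in T^\R_{(X,\om)}\cM$, which preserves imaginary periods and hence the horizontal cylinder structure, and to arrange for the ratio of two cylinder circumferences to become transcendental.

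First, I would set up notation: let $\al_1, \ldots, \al_n$ be the core curves of the horizontal cylinders of $(X,\om)$, with circumferences $w_i = \int_{\al_i}\om > 0$. For $\eta \in T^\R_{(X,\om)}\cM$ and $t$ in a small interval around $0$, the perturbation $(X,\om) + t\eta$ lies in $\cM$ and stays horizontally periodic with the same cylinder combinatorics; each circumference becomes the affine function $w_i(t) = w_i + t\,\eta(\al_i)$, so $w_i(t)/w_j(t)$ is a Möbius function of $t$. The key observation is that if this Möbius function is non-constant, its image on any open subinterval is an open subset of $\R$, hence uncountable, so some $t^*$ produces a ratio outside the countable field $\ol{\Q}$.

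The main step, then, is to produce $\eta$ and indices $i, j$ for which $w_i(t)/w_j(t)$ varies non-trivially. I would introduce the linear evaluation map $\Phi : T^\R_{(X,\om)}\cM \to \R^n$, $\eta \mapsto (\eta(\al_1), \ldots, \eta(\al_n))$, and observe that $\ker(\Phi) = \Pres_{(X,\om)}\cM$ by definition. Combining the hypothesis $\Tw_{(X,\om)}\cM = \Pres_{(X,\om)}\cM$ with Corollary 8.11 in \cite{Wri:cylinder} then gives $\dim \Phi(T^\R_{(X,\om)}\cM) \geq \rank(\cM) \geq 2$, so the image cannot lie in the one-dimensional line spanned by $(w_1, \ldots, w_n)$. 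Any $\eta$ whose image is not proportional to $(w_1, \ldots, w_n)$ provides indices $i, j$ with $\eta(\al_i)\,w_j \neq \eta(\al_j)\,w_i$, making $w_i(t)/w_j(t)$ non-constant.

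The expected main obstacle is this linear-algebraic dimension bound on $\Phi$, which is where both the hypothesis $\Tw = \Pres$ and the rank-codimension inequality are essential. Once it is in hand, selecting a small $t^*$ for which $w_i(t^*)/w_j(t^*) \notin \ol{\Q}$ and setting $(X^\pr, \om^\pr) = (X,\om) + t^* \eta$ completes the argument without further difficulty.
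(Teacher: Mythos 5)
Your proposal is correct and follows essentially the same route as the paper: both pick $\eta \in T^\R_{(X,\om)}\cM$ whose evaluations on core curves are not proportional to the circumference vector (in the paper's phrasing, $\eta \notin \Pres_{(X,\om)}\cM + \R\eta_h$ where $\eta_h$ is the horizontal stretch), and then use countability of $\ol{\Q}$ to choose $t$ so that some non-constant ratio $w_i(t)/w_j(t)$ leaves $\ol{\Q}$. Your packaging via the evaluation map $\Phi$ with $\ker\Phi = \Pres_{(X,\om)}\cM$ and the image-dimension count from Corollary 8.11 is a slightly more direct way to state the same linear-algebra fact that the paper obtains by a brief contradiction-and-differentiation argument.
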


\begin{proof}
The {\em horizontal stretch} $\eta_h \in T^\R_{(X,\om)}\cM$ is defined by
\be
\eta_h(\gam) = \re \int_\gam \om
\ee
for any $\gam \in H_1(X,Z(\om);\Z)$. Since $\Tw_{(X,\om)}\cM = \Pres_{(X,\om)}\cM$, the codimension of $\Pres_{(X,\om)}\cM$ in $T^\R_{(X,\om)}\cM$ is at least $2$, so there exists
\be
\eta \in T^\R_{(X,\om)}\cM \sm \left(\Pres_{(X,\om)}\cM + \R \eta_h\right) .
\ee
For any horizontal cylinder $C$ on $(X,\om)$ and any rightward closed geodesic $\gam_C \subset C$, the value $\eta_h(\gam_C)$ is the circumference of $C$. The horizontal saddle connections and horizontal cylinders on $(X,\om)$ persist on the nearby holomorphic $1$-forms $(X,\om) + t\eta$ (see Section 4 in \cite{Wri:cylinder}), so $(X,\om) + t\eta$ is also horizontally periodic. Let $w_C(t)$ denote the circumference of $C$ on $(X,\om) + t\eta$. Suppose that for any $2$ horizontal cylinders $C,C^\pr$ on $(X,\om)$, the ratio
\be
\frac{w_C(t)}{w_{C^\pr}(t)} = \frac{\eta_h(\gam_C) + t\eta(\gam_C)}{\eta_h(\gam_{C^\pr}) + t\eta(\gam_{C^\pr})}
\ee
is constant. By differentiating with respect to $t$, we see that
\be
\eta_h(\gam_C) \eta(\gam_{C^\pr}) - \eta_h(\gam_{C^\pr}) \eta(\gam_C) = 0 .
\ee
Then there is a nonzero $a \in \R$ such that $\eta(\gam_C) = a\eta_h(\gam_C)$ for all horizontal cylinders $C$ on $(X,\om)$. This means
\be
\eta - a\eta_h \in \Pres_{(X,\om)}\cM ,
\ee
a contradiction. Thus, there exist horizontal cylinders $C,C^\pr$ such that $w_C(t)/w_{C^\pr}(t)$ is not constant. Since $\ol{\Q}$ is countable, there are arbitrarily small values of $t \in \R$ such that on $(X,\om) + t\eta$, the ratio of the circumferences of $C$ and $C^\pr$ is not in $\ol{\Q}$.
\end{proof}

\begin{rmk}
The statement of Lemma \ref{lem:ratiocircum} is stronger than what we will need. We will only need a ratio of circumferences that is not in $\bigcup_{d \in \Z_{>0}}\Q(\sqrt{d})$.
\end{rmk}

The surgery for navigating leaves of the absolute period foliation in the proof of Lemma \ref{lem:pop} can be generalized as follows. See Section 5 in \cite{CDF:transfer} for a similar discussion. Fix $Z \in Z(\om)$, and let $\varphi_1 : [0,T] \ra X$ be an embedded piecewise-geodesic path starting at $Z$. Since the cone angle around $Z$ is $4\pi$, there is at most $1$ other piecewise-geodesic path $\varphi_2 : [0,T] \ra X$ starting at $Z$, such that for all $t \in [0,T]$,
\be
\int_{\varphi_1([0,t])} \om = \int_{\varphi_2([0,t])} \om .
\ee
In other words, $\varphi_1$ and $\varphi_2$ go in the same direction at corresponding times. When it exists, $\varphi_2$ is called the {\em twin path} of $\varphi_1$. The angle between the start of $\varphi_1$ and the start of $\varphi_2$ is $2\pi$. Suppose additionally that $\varphi_2$ is embedded and disjoint from $\varphi_1$ except at $Z$. For $j = 1,2$, slit $X$ along $\varphi_j$ and denote the left and right sides by $\varphi_{j,\ell},\varphi_{j,r}$. Reglue opposite sides of these slits by identifying $\varphi_{j,\ell}(t)$ with $\varphi_{3-j,r}(t)$ for all $t \in [0,T]$. The result is a holomorphic $1$-form ${\rm Sch}_{\varphi_1}(X,\om) \in \Om\cM_g(1^{2g-2})$ in the leaf of the absolute period foliation through $(X,\om)$. By replacing $T$ with $T^\pr$ for $0 \leq T^\pr \leq T$, we obtain a path in this leaf from $(X,\om)$ to ${\rm Sch}_{\varphi_1}(X,\om)$. This surgery can also be carried out when $\varphi_1$ is an embedded piecewise-geodesic loop starting at $Z$, and the twin path $\varphi_2$ is an embedded path disjoint from $\varphi_1$ except at $Z$, and similarly yields a path in the leaf through $(X,\om)$ from $(X,\om)$ to ${\rm Sch}_{\varphi_1}(X,\om) \in \Om\cM_g(1^{2g-2})$. On $(X^\pr,\om^\pr) = {\rm Sch}_{\varphi_1}(X,\om)$, there are {\em inverse paths} $\ol{\varphi_1},\ol{\varphi_2} : [0,T] \ra X^\pr$ starting at $Z$ and parametrized so that for all $t \in [0,T]$,
\be
\int_{\ol{\varphi}_1([0,t])} \om^\pr = \int_{\varphi_1([T-t,T])} \om .
\ee
The paths $\ol{\varphi_1},\ol{\varphi_2}$ are twin paths and are embedded and disjoint, and satisfy
\be
{\rm Sch}_{\ol{\varphi}_1} {\rm Sch}_{\varphi_1} (X,\om) = (X,\om) .
\ee
Moreover, $\varphi_1$ and $\ol{\varphi_1}$ determine the same path in the leaf of the absolute period foliation through $(X,\om)$, going in opposite directions.

\begin{figure}
    \centering
    \includegraphics[width=0.7\textwidth]{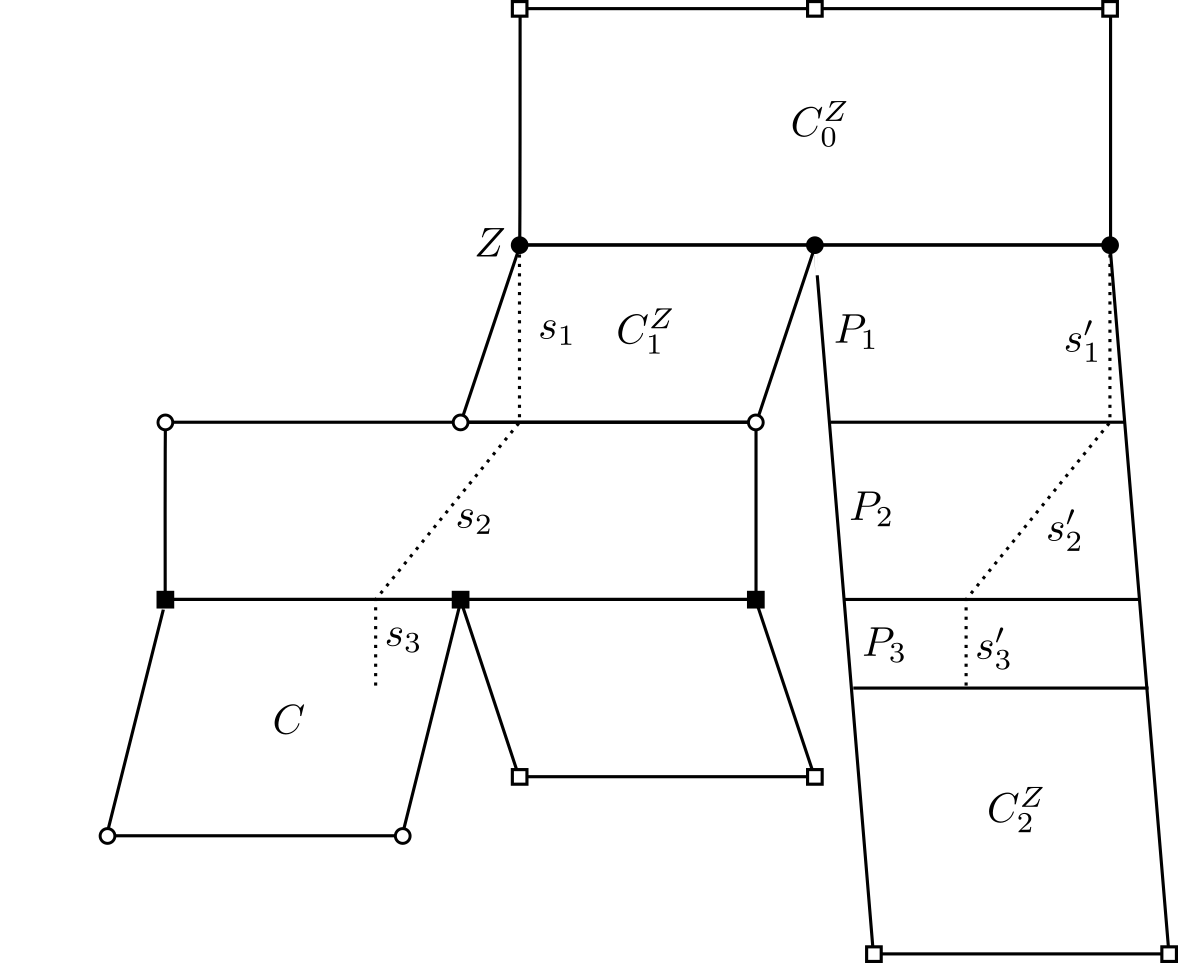}
    \caption{An example of twin paths $\varphi_1 = s_1 \cup s_2 \cup s_3$ and $\varphi_2 = s_1^\pr \cup s_2^\pr \cup s_3^\pr$ on $(X_{2,T},\om_{2,T}) \in \Om\cM_3(1^4)$. The extra horizontal lines in $C_2^Z$ indicate the boundaries of the parallelograms $P_1,P_2,P_3$.}
    \label{fig:sch}
\end{figure}

We will be interested in how this surgery changes the cylinder decomposition of a horizontally periodic holomorphic $1$-form $(X,\om)$. The {\em cylinder digraph} $\Gam(X,\om)$ is the directed graph with a vertex for each horizontal cylinder $C$, and an edge from $C_1$ to $C_2$ for each horizontal saddle connection in the intersection of the top boundary of $C_2$ with the bottom boundary of $C_1$. In general, $\Gam(X,\om)$ is strongly connected, meaning there is a directed path from any vertex to any other vertex.

We are now ready to complete the proof of Theorem \ref{thm:sat} by addressing the case of rank at least $2$.

\begin{thm} \label{thm:rank2}
If $\rank(\cM) \geq 2$, then $\cM$ is the principal stratum.
\end{thm}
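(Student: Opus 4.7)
The plan is to exhibit a ``free pair of pants'' inside $\cM$ — a flat pair of pants at some zero $Z$ whose three cylinder shears and a transverse horizontal stretch all lie in $T_{(X,\om)}\cM$ — and then propagate this local freeness via twin-path surgeries until every horizontal cylinder of $(X,\om)$ can be sheared while remaining in $\cM$, at which point $T_{(X,\om)}\cM$ is forced to be full.

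First, I would produce a generic pants decomposition. Apply Lemma \ref{lem:pop} to obtain horizontally periodic $(X,\om)\in\cM$ with the maximal count $3g-3$ of horizontal cylinders and every saddle connection a loop, so at each zero $Z$ there is a flat pair of pants $C_0^Z,C_1^Z,C_2^Z$ satisfying $w_0^Z=w_1^Z+w_2^Z$. Since the cylinder count is maximal, Lemma~8.6 of \cite{Wri:cylinder} forces $\Tw_{(X,\om)}\cM=\Pres_{(X,\om)}\cM$, and Lemma \ref{lem:ratiocircum} then provides a nearby horizontally periodic point (which retains the pants structure) on which two cylinders have circumference ratio outside $\ol{\Q}$. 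Since $\ol{\Q}$ is closed under products and ratios, any chain of adjacent cylinders connecting these two contains a consecutive pair whose ratio is also outside $\ol{\Q}$; the loop property then forces this pair to share a zero, and hence to coincide with two cylinders of a single pair of pants $C_0^Z,C_1^Z,C_2^Z$. Combined with $w_0^Z=w_1^Z+w_2^Z$, such a transcendental ratio forces $1/w_0^Z,1/w_1^Z,1/w_2^Z$ to be $\Q$-linearly independent (clearing denominators in a putative $\Q$-relation yields a degree $\leq 2$ polynomial equation over $\Q$ satisfied by the transcendental ratio). Lemma \ref{lem:reltwist} then places all three cylinder shears individually into $\Tw_{(X,\om)}\cM$.

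Next, I would upgrade the pants at $Z$ to a free pair of pants. The hypothesis $\rank(\cM)\geq 2$ together with $\Tw_{(X,\om)}\cM=\Pres_{(X,\om)}\cM$ forces, exactly as in the proof of Lemma \ref{lem:ratiocircum}, the existence of a tangent direction $\eta\in T^\R_{(X,\om)}\cM\setminus(\Pres_{(X,\om)}\cM+\R\eta_h)$, and $\eta$ acts as an independent horizontal stretch of a leg of the pants at $Z$. Together with the three free shears, this realizes the pants at $Z$ as a free pair of pants in the sense of Theorem \ref{thm:free}.

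The hard part is propagating this freeness to every horizontal cylinder of $(X,\om)$. The mechanism is to use the free shears and stretch to run a long path in the absolute period leaf — stretching a leg so as to nearly collapse a chosen target cylinder, then performing a twin-path (Schiffer) surgery as described after Lemma \ref{lem:ratiocircum} — arranging that the target cylinder sits in a new pair of pants whose circumference reciprocals are $\Q$-linearly independent, so that Lemma \ref{lem:reltwist} yields its shear. The main obstacle is the bookkeeping along these long excursions: cylinders collapse and re-form in hard-to-predict ways, and one must maintain the $3g-3$ cylinder count and at least one transcendental circumference ratio at each intermediate stage in order for the freeness argument to continue to apply. Once every cylinder shear on $(X,\om)$ lies in $T_{(X,\om)}\cM$, Corollary~8.11 of \cite{Wri:cylinder} yields $\rank(\cM)=g$, and the Rel classes $\eta_{Z'}=\sum_j \frac{1}{h_j^{Z'}}\eta_{C_j^{Z'}}\in\ker p$ arising from the pants at each zero $Z'$ supplement the shears to span $\ker p$. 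Thus $T_{(X,\om)}\cM=H^1(X,Z(\om);\C)$ as a complex subspace, so $\cM$ is open; since the principal stratum $\Om\cM_g(1^{2g-2})$ is connected for $g\geq 3$, we conclude $\cM=\Om\cM_g(1^{2g-2})$.
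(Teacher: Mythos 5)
Your opening two paragraphs track the paper closely and are fine: maximal cylinder count from Lemma \ref{lem:pop}, the transcendental ratio from Lemma \ref{lem:ratiocircum}, the chain argument to localize it to an adjacent pair $C_0^Z,C_1^Z$, and the $\Q$-linear independence of $1/w_0^Z,1/w_1^Z,1/w_2^Z$ via a degree-$\leq 2$ polynomial relation — all of this matches the paper and is correct. One small error in the ``upgrade'' paragraph: the direction $\eta\in T^\R_{(X,\om)}\cM\setminus(\Pres_{(X,\om)}\cM+\R\eta_h)$ does \emph{not} ``act as an independent horizontal stretch of a leg of the pants''; it is a generic tangent vector with no such specific shape, and this step is both unjustified and unnecessary. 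Once $\eta_{C_0^Z},\eta_{C_1^Z},\eta_{C_2^Z}\in T_{(X,\om)}\cM$, the stretches needed later are the \emph{vertical} scalings $i\eta_{C_j^Z}$, available because $T_{(X,\om)}\cM$ is a complex subspace; that is what the paper uses.

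The genuine gap is the propagation step, which you explicitly flag as ``the hard part'' and leave as a plan. This is precisely where the content of Theorem \ref{thm:rank2} lives, and the worry you raise (``maintain the $3g-3$ cylinder count and at least one transcendental circumference ratio at each intermediate stage'') is actually a red herring — the paper's surgery deliberately coalesces cylinders and never tries to preserve the count. What the paper does instead: stretch $C_2^Z$ vertically so its height dominates the total, so that the twin path of a piecewise-geodesic $\varphi_1$ (built from a loop-free directed path in the cylinder digraph from $C_0^Z$ through $C_1^Z$ to a chosen target cylinder $C$) is forced to lie entirely inside $C_2^Z$; slit along $\varphi_1$ and its twin and reglue, and track that the result has exactly three cylinders $C,C',C_2^Z$ of circumferences $w, w+w_2^Z, w_2^Z$ through $Z$, forming a flat pair of pants. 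If $w/w_2^Z\notin\ol\Q$, Lemma \ref{lem:reltwist} gives $\eta_C$ in the twist space after surgery, and because the shearing curve $\gam_C$ is disjoint from $\varphi_1$, the two surgeries commute, so $\eta_C$ transports back to $\Tw_{(X,\om)}\cM$. If $w/w_2^Z\in\ol\Q$, one performs a preliminary surgery along a loop in the cylinder digraph through $C_0^Z$ and $C_2^Z$ to replace the third leg's circumference $w_2^Z$ by $w_1^Z+w_2^Z$, which forces the new ratio $w/(w_1^Z+w_2^Z)$ to be transcendental, and then applies the first case. None of this mechanism — the choice of path in the digraph, the height bound ensuring the twin path stays in $C_2^Z$, the commutation of the two surgeries, and the dichotomy on $w/w_2^Z$ — appears in your proposal, so the argument as written does not establish the theorem.
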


\begin{proof}
By Lemma \ref{lem:pop}, there is a horizontally periodic $(X_0,\om_0) \in \cM$ with $3g-3$ horizontal cylinders. Since $(X_0,\om_0)$ has the maximum possible number of horizontal cylinders, $\Tw_{(X_0,\om_0)}\cM = \Pres_{(X_0,\om_0)}\cM$. By Lemma \ref{lem:ratiocircum}, there is a nearby horizontally periodic $(X,\om) \in \cM$ with $3g-3$ horizontal cylinders such that two of the horizontal cylinders have circumferences whose ratio is not in $\ol{\Q}$. It follows that there are two such cylinders on $(X,\om)$ that share a horizontal saddle connection in their boundary. This saddle connection is a loop through a zero $Z \in Z(\om)$, and these two cylinders are given by $C_0^Z,C_1^Z$, possibly after swapping $C_1^Z$ and $C_2^Z$. Since $w_0^Z/w_1^Z \notin \ol{\Q}$, the reciprocals $\frac{1}{w_0^Z},\frac{1}{w_1^Z},\frac{1}{w_2^Z}$ are linearly independent over $\Q$. Then by Lemma \ref{lem:reltwist},
\be
{\rm span}_\R(\eta_{C_0^Z},\eta_{C_1^Z},\eta_{C_2^Z}) \subset \Tw_{(X,\om)}\cM .
\ee

Since $T_{(X,\om)}\cM$ is a complex subspace of $H^1(X,Z(\om);\C)$, we also have $i\eta_{C_j^Z} \in T_{(X,\om)}\cM$ for $j = 0,1,2$. The locally defined path $(X_{j,t},\om_{j,t}) = (X,\om) + it\eta_{C_j^Z} \in \cM$ extends to a path that is well-defined for all $t > -1$, given by vertically scaling $C_j^Z$. On $(X_{j,t},\om_{j,t})$, the height of $C_j^Z$ is given by $h_j^Z(t) = (1+t)h_j^Z$ while the circumference of $C_j^Z$ is unchanged and all other horizontal cylinders on $(X,\om)$ are unchanged. Every horizontal saddle connection and every horizontal cylinder on $(X,\om)$ persists on $(X_{j,t},\om_{j,t})$, so there is an obvious identification of the cylinder digraphs $\Gam(X,\om) \cong \Gam(X_{j,t},\om_{j,t})$. Let $H$ be the sum of the heights of all horizontal cylinders on $(X,\om)$, and choose $T > 0$ such that
\be
\min_j h_j^Z(T) > 2H .
\ee

Choose a horizontal cylinder $C \notin \{C_0^Z,C_1^Z,C_2^Z\}$ on $(X,\om)$, and consider the holomorphic $1$-form $(X_{2,T},\om_{2,T})$. The circumference of $C$ is the same on $(X,\om)$ and on $(X_{2,T},\om_{2,T})$, so let $w$ be this circumference. Choose a directed path $\rho$ in the cylinder digraph from $C_0^Z$ to $C$. We may assume that $\rho$ does not contain a loop, so $\rho$ passes through exactly one of $C_1^Z,C_2^Z$. Possibly after swapping $C_1^Z$ and $C_2^Z$, we may assume $\rho$ passes through $C_1^Z$. Denote the horizontal cylinders in $\rho$ by
\be
C_0,C_1,C_2,\dots,C_n,
\ee
where $C_0 = C_0^Z$, $C_1 = C_1^Z$, and $C_n = C$. On $(X_{2,T},\om_{2,T})$, choose a straight segment $s_1 \subset \ol{C_1}$ starting at $Z$, crossing $C_1$ from top to bottom, and ending at a point in $(\ol{C_1} \cap \ol{C_2}) \sm Z(\om)$. Similarly, for $2 \leq k \leq n-1$, choose a straight segment $s_k \subset \ol{C_k}$ starting at the endpoint of $s_{k-1}$, crossing $C_k$ from top to bottom, and ending at a point in $(\ol{C_k} \cap \ol{C_{k+1}}) \sm Z(\om)$. Lastly, choose a straight segment $s_n \subset \ol{C_n}$ starting at the endpoint of $s_{n-1}$ and ending at a point in $C_n$. The union $s_1 \cup \cdots \cup s_n$ forms a piecewise-geodesic path $\varphi_1$. On $(X_{2,T},\om_{2,T})$, the height of $C_2$ is greater than the sum of the heights of the cylinders in $\rho$. Thus, the twin path $\varphi_2$ is an embedded path in $C_2^Z \cup \{Z\}$ and is disjoint from $\varphi_1$ except at $Z$. We can write $\varphi_2$ as a union of straight segments $s_1^\pr \cup \cdots \cup s_n^\pr$ so that $s_j,s_j^\pr$ are parallel and of the same length.

The holomorphic $1$-form ${\rm Sch}_{\varphi_1}(X_{2,T},\om_{2,T}) \in \cM$ can be viewed as the result of slitting along $s_1,s_1^\pr$ and regluing opposite sides, then slitting along $s_2,s_2^\pr$ and regluing opposite sides, and so on up through $s_n,s_n^\pr$. Let $P_k \subset C_2^Z$ be the open parallelogram bounded by the two sides of $s_k^\pr$ and by the two closed horizontal geodesics through the endpoints of $s_k^\pr$. When we slit along $s_1,s_1^\pr$ and reglue opposite sides, we are equivalently removing $P_1$ from $C_1^Z$ and adding it to $C_1$ by gluing it in along the two sides of $s_1$. As a result, $P_1$ and the cylinder $C_1$ become part of the cylinder $C_0$, and the height of $C_0$ increases by $h_1^Z$. Similarly, for $2 \leq k \leq n-1$, when we slit along $s_k,s_k^\pr$ and reglue opposite sides, we remove $P_k$ from $C_2^Z$ and glue in $P_k$ along the two sides of $s_k$. As a result, the circumference of $C_k$ increases by $w_2^Z$, while the height of $C_2^Z$ decreases by the height of $C_k$. Lastly, when we slit and reglue along $s_n,s_n^\pr$, we remove $P_n$ from $C_2^Z$ and glue in $P_n$ along the two sides of $s_n$. As a result, the height of $C$ decreases, and a new cylinder $C^\pr$ forms with height equal to the height of $s_n$ and circumference $w + w_2^Z$. Thus, on ${\rm Sch}_{\varphi_1}(X_{2,T},\om_{2,T})$, there are exactly $3$ horizontal cylinders $C,C^\pr,C_2^Z$ containing $Z$ in their boundary, and these cylinders form a flat pair of pants. See Figure \ref{fig:sch}.

Suppose that $w/w_2^Z \notin \ol{\Q}$. Then by Lemma \ref{lem:reltwist}, applied to the cylinders $C,C^\pr,C_2^Z$ on ${\rm Sch}_{\varphi_1}(X_{2,T},\om_{2,T})$, we have
\be
{\rm span}_\R (\eta_C,\eta_{C^\pr},\eta_{C_2^Z}) \subset \Tw_{{\rm Sch}_{\varphi_1}(X_{2,T},\om_{2,T})}\cM .
\ee
In particular, for $t \in \R$,
\be
{\rm Sch}_{\varphi_1}(X_{2,T},\om_{2,T}) + t \eta_C \in \cM .
\ee
This means that after slitting and regluing along $\varphi_1$ and its twin path, we can shear $C$ while remaining in $\cM$. Let $\ol{\varphi_1}$ be an inverse path to $\varphi_1$ on ${\rm Sch}_{\varphi_1}(X_{2,T},\om_{2,T})$. This shear can also be described by cutting $C$ along a horizontal closed geodesic $\gam_C$ disjoint from $\varphi_1$, twisting one side relative to the other, and regluing. Since $\varphi_1$ and $\gam_C$ are disjoint, these two surgeries can be applied in either order to get the same holomorphic $1$-form, and similarly for $\ol{\varphi_1}$ and $\gam_C$. In other words, for $t \in \R$, we have
\be
(X_{2,T},\om_{2,T}) + t\eta_C = {\rm Sch}_{\ol{\varphi_1}} \left({\rm Sch}_{\varphi_1}(X_{2,T},\om_{2,T}) + t\eta_C\right) \in \cM ,
\ee
and thus $\eta_C \in \Tw_{(X_{2,T},\om_{2,T})}\cM$. Then by parallel transport along the path $(X_{2,t},\om_{2,t})$, we see that $\eta_C \in \Tw_{(X,\om)}\cM$.

Next, suppose that $w/w_2^Z \in \ol{\Q}$. We may assume that $\Gam(X,\om)$ does not contain a directed path from $C_2^Z$ to $C$ disjoint from $C_0^Z$. Otherwise, since $w/w_1^Z \notin \ol{\Q}$, we could apply the previous argument with $C_1^Z$ and $C_2^Z$ swapped. Choose a directed path $\rho$ in $\Gam(X,\om)$ from $C_1^Z$ to $C$ that does not contain a loop. Additionally, choose an embedded loop $\ell$ in $\Gam(X,\om)$ starting and ending at $C_0^Z$ and passing through $C_2^Z$. Then the path $\rho$ and the loop $\ell$ are disjoint. On $(X_{1,T},\om_{1,T})$, choose a piecewise-geodesic loop $\varphi_1$ starting at $Z$, ending at a point in $\gam_2^Z \sm \{Z\}$, and passing through the saddle connections in $\ell$ in order as before. The cylinders $C_0^Z,C_1^Z$ persist along the path from $(X_{1,T},\om_{1,T})$ to ${\rm Sch}_{\varphi_1}(X_{1,T},\om_{1,T})$ associated to $\varphi_1$. On ${\rm Sch}_{\varphi_1}(X_{1,T},\om_{1,T})$, there is exactly $1$ other horizontal cylinder that contains $Z$ in its boundary, which has circumference $w_1^Z + w_2^Z$, and which we denote $C_2^Z$. The cylinder $C$ also persists along this path. Since $w/w_2^Z \in \ol{\Q}$ and $w_1^Z/w_2^Z \notin \ol{\Q}$, it must be that $w/(w_1^Z + w_2^Z) \notin \ol{\Q}$. The previous argument, with ${\rm Sch}_{\varphi_1}(X_{1,T},\om_{1,T})$ in place of $(X_{2,T},\om_{2,T})$, now implies that $\eta_C \in \Tw_{{\rm Sch}_{\varphi_1}(X_{1,T},\om_{1,T})}\cM$. As before, we then see that $\eta_C \in \Tw_{(X_{1,T},\om_{1,T})}\cM$ and then conclude that $\eta_C \in \Tw_{(X,\om)}\cM$.

We have shown that $\eta_C \in \Tw_{(X,\om)} \cM$ for all horizontal cylinders $C$ on $(X,\om)$. Since the $\eta_C$ are linearly independent over $\R$, and since $(X,\om)$ has $3g-3$ horizontal cylinders, the dimension of $\Tw_{(X,\om)}\cM$ is at least $3g-3$. Since $\om$ has $2g-2$ distinct zeros, the real dimension of $\ker(p) \cap \Tw_{(X,\om)}\cM$ is at most $2g-3$. Then
\be
\rank(\cM) \geq \dim_\R p(\Tw_{(X,\om)} \cM) \geq (3g-3) - (2g-3) = g ,
\ee
so $\cM$ has rank $g$. Then since $\cM$ is saturated for the absolute period foliation, we have
\be
\dim_\C \cM = 2g + (2g-3) = 4g-3 = \dim_\C \Om\cM_g(1^{2g-2}) ,
\ee
hence $\cM = \Om\cM_g(1^{2g-2})$.
\end{proof}

\begin{rmk} \label{rmk:fpp}
The argument in the proof of Theorem \ref{thm:rank2} applies more broadly to orbit closures in any stratum $\Om\cM_g(k_1,\dots,k_n)$ with some $k_j = 1$. Specifically, suppose $(X,\om)$ is a horizontally periodic holomorphic $1$-form in an orbit closure $\cM \subset \Om\cM_g(k_1,\dots,k_n)$ with $3$ horizontal cylinders $C_0,C_1,C_2$ forming a flat pair of pants. If $\eta_{C_0},\eta_{C_1},\eta_{C_2} \in T_{(X,\om)}\cM$, and if the circumferences $w_0,w_1,w_2$ satisfy $w_1/w_2 \notin \bigcup_{d \in \Z_{>0}}\Q(\sqrt{d})$, then $\eta_C \in T_{(X,\om)}\cM$ for all horizontal cylinders $C$ on $(X,\om)$.
\end{rmk}

Lastly, we turn to the proof of Theorem \ref{thm:free}, which strengthens the arguments in the proof of Theorem \ref{thm:rank2} using an analysis of the space of directed loops in the cylinder digraph. Consider the stratum $\Om\cM_g(k_1,\dots,k_n)$ of holomorphic $1$-forms with exactly $n$ distinct zeros with orders $k_1,\dots,k_n$. Let $\cM \subset \Om\cM_g(k_1,\dots,k_n)$ be an orbit closure. A {\em free pair of pants} on a holomorphic $1$-form in $\cM$ is a collection of $3$ parallel cylinders $C_0^Z,C_1^Z,C_2^Z$ forming a flat pair of pants as in Figure \ref{fig:fpp}, such that each $C_j^Z$ can be sheared while remaining in $\cM$. When $(X,\om)$ is horizontally periodic, this means $\eta_{C_0^Z},\eta_{C_1^Z},\eta_{C_2^Z} \in T_{(X,\om)}\cM$.

Fix a horizontally periodic $(X,\om) \in \Om\cM_g(k_1,\dots,k_n)$. Let $\Gam = \Gam(X,\om)$ be the cylinder digraph, and let $V(\Gam)$ and $E(\Gam)$ be the associated sets of vertices and edges. Since $|E(\Gam)|$ is the number of horizontal saddle connections on $(X,\om)$, we have
\be
|E(\Gam)| = \sum_{j=1}^n (k_j + 1) = 2g - 2 + n .
\ee
Let $\Q^{E(\Gam)}$ be the $\Q$-vector space of formal $\Q$-linear combinations of directed edges in $\Gam$. Let $L(\Gam) \subset \Q^{E(\Gam)}$ be the $\Q$-vector subspace spanned by the directed loops in $\Gam$. Note that $L(\Gam)$ is also spanned by the embedded directed loops in $\Gam$, which pass through each vertex at most once. Since $\Gam$ is strongly connected, by Lemma 5.1 in \cite{MW:fullrank} we have
\be
\dim_\Q L(\Gam) = |E(\Gam)| - |V(\Gam)| + 1 = \dim_\C \Om\cM_g(k_1,\dots,k_n) - |V(\Gam)| .
\ee

For each horizontal cylinder $C$ on $(X,\om)$, choose a saddle connection $\gam_C \subset C \cup Z(\om)$ crossing $C$ from bottom to top. Let $\ell$ be an embedded directed loop in $\Gam$, and let $C_1,\dots,C_m$ and $\gam_1,\dots,\gam_m$ be the associated cyclically ordered sequences of horizontal cylinders and horizontal saddle connections. The saddle connections $\gam_j$ are oriented rightward, and the cylinders $C_j$ are indexed so that $\gam_j \cup \gam_{j+1} \subset \ol{C_j}$, where indices are taken modulo $m$. Let $\wt{\ell} \subset X \sm Z(\om)$ be an embedded piecewise-geodesic loop given by a union of straight segments $s_1 \cup \cdots \cup s_m$, such that $s_j \subset (C_j \cup \gam_j \cup \gam_{j+1}) \sm Z(\om)$ crosses $C_j$ from top to bottom and is disjoint from $\gam_{C_j}$. The loop $\wt{\ell}$ is determined up to free homotopy in $X \sm Z(\om)$. For $t > 0$, define $(X_t,\om_t)$ by cutting $(X,\om)$ along $\wt{\ell}$ and gluing in parallelograms $P_{j,t}$, such that one pair of sides of $P_{j,t}$ is horizontal and of length $t$, and the other pair of sides arise from cutting along $s_j$. The path $(X_t,\om_t)$ extends to a path in $\Om\cM_g(k_1,\dots,k_n)$ that is well-defined for $t > -\min_j \int_{\gam_j} \om$. The associated tangent vector $\eta_\ell \in H^1(X,Z(\om);\R)$ is the relative cohomology class that evaluates to $1$ on each of the horizontal saddle connections $\gam_j$ and evaluates to $0$ on all other horizontal saddle connections and on the saddle connections $\gam_{C_j}$. If $\ell_1,\dots,\ell_M$ is a $\Q$-basis for $L(\Gam)$, then
\be
\{\eta_{\ell_1},\dots,\eta_{\ell_M}\} \cup \{\eta_C \; : \; C \in V(\Gam)\}
\ee
is an $\R$-basis for $H^1(X,Z(\om);\R)$. In light of Remark \ref{rmk:fpp}, to prove Theorem \ref{thm:free} it is enough to show that all of the $\eta_{\ell_j}$ lie in the tangent space to $\cM$.

\begin{figure}
    \centering
    \includegraphics[width=\textwidth]{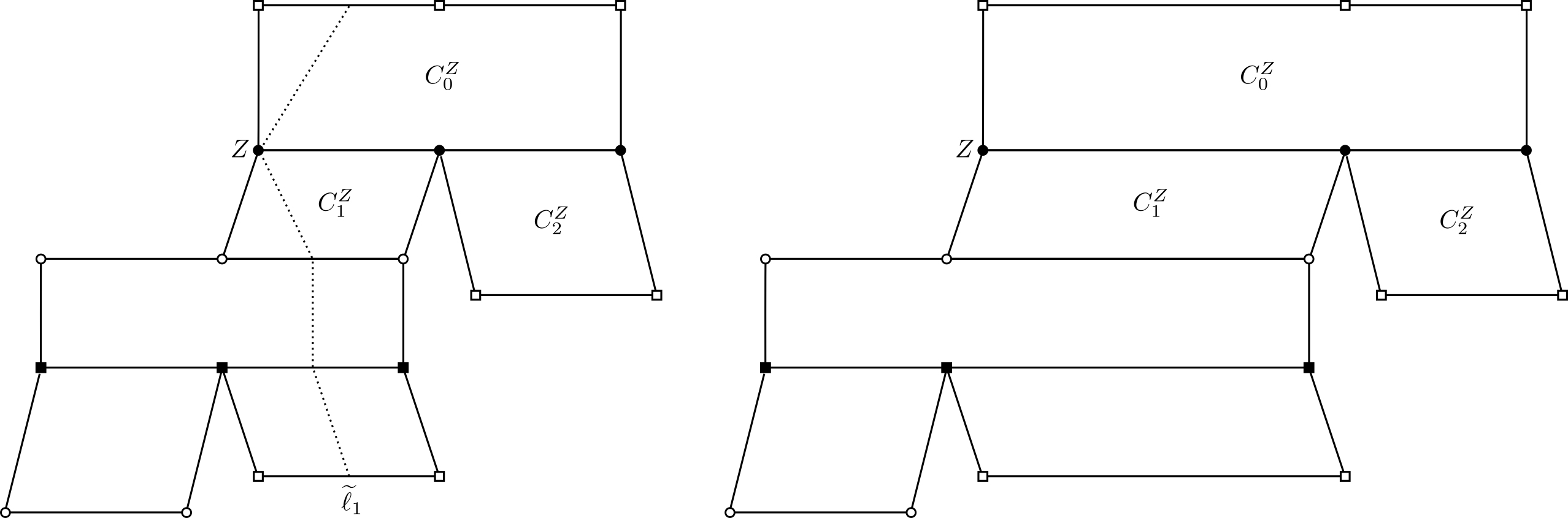}
    \caption{Left: $(X,\om)$ with the loop $\wt{\ell}_1$ indicated by dashes. Right: the result of stretching $C_2^Z$, slitting $\wt{\ell}_1$ and its twin path and regluing, and then restoring $C_2^Z$ to its original height.}
    \label{fig:sch2}
\end{figure}

\begin{proof} (of Theorem \ref{thm:free})
By rotating $(X,\om) \in \cM$, we may assume that there are horizontal cylinders on $(X,\om)$ forming a free pair of pants. By Corollary 6 in \cite{SW:minimal}, the horocycle through $(X,\om)$ accumulates on a horizontally periodic holomorphic $1$-form. Since the horocycle flow preserves lengths of horizontal saddle connections and the heights and circumferences of horizontal cylinders, we may additionally assume that $(X,\om)$ is horizontally periodic. Let $C_0^Z,C_1^Z,C_2^Z$ be horizontal cylinders in a free pair of pants on $(X,\om)$ as in Figure \ref{fig:fpp}. Since $T_{(X,\om)}\cM$ is a complex subspace of $H^1(X,Z(\om);\C)$, we have
\be
{\rm span}_\C (\eta_{C_0^Z},\eta_{C_1^Z},\eta_{C_2^Z}) \subset T_{(X,\om)}\cM .
\ee
For each horizontal cylinder $C$ on $(X,\om)$, choose a saddle connection $\gam_C \subset C \cup Z(\om)$. Then each embedded loop $\ell \subset \Gam$ determines an embedded piecewise-geodesic loop $\wt{\ell} \subset X \sm Z(\om)$ disjoint from the saddle connections $\gam_C$, up to free homotopy in $X \sm Z(\om)$ as above.

First, suppose that $\ell$ passes through $C_0^Z$. Since $\ell$ is embedded, $\ell$ passes through exactly one of $C_1^Z$ or $C_2^Z$. We may assume that $\ell$ passes through $C_1^Z$, possibly after swapping $C_1^Z$ and $C_2^Z$. Let $s_1,\dots,s_m$ be the segments in $\wt{\ell}$ in cyclic order, indexed so that $s_1$ crosses $C_1^Z$ from top to bottom. Then $s_m$ crosses $C_0^Z$ from top to bottom. Let $\wt{s}_1$ be the straight segment in $\ol{C_1^Z}$, from $Z$ to the endpoint of $s_1$, that is disjoint from $\gam_{C_1^Z}$ except at $Z$. Let $\wt{s}_m$ be the straight segment in $\ol{C_0^Z}$, from the endpoint of $s_{m-1}$ to $Z$, that is disjoint from $\gam_{C_0^Z}$ except at $Z$. Let $\wt{\ell}_1 = \wt{s}_1 \cup s_2 \cup \cdots \cup s_{m-1} \cup \wt{s}_m$. Vertically stretch $C_2^Z$ so that its height is greater than the sum of the heights of the other horizontal cylinders on $(X,\om)$, and let $(X_1,\om_1)$ be the resulting holomorphic $1$-form. Let $\varphi_1 = \wt{\ell_1}$, treated as a parametrized loop in $(X_1,\om_1)$ starting at $Z$. The twin path $\varphi_2$ is an embedded path contained in $C_2^Z \cup \{Z\}$. Slit along $\varphi_1 \cup \varphi_2$ and reglue opposite sides to get a holomorphic $1$-form ${\rm Sch}_{\varphi_1}(X_1,\om_1)$, and then vertically shrink $C_2^Z$ so that its height is equal to its height on $(X,\om)$ to get a holomorphic $1$-form $(X_2,\om_2)$. The free pair of pants on $(X,\om) \in \cM$ and the fact that $\cM$ is locally defined by homogeneous linear equations ensures that $(X_2,\om_2) \in \cM$. Equivalently, $(X_2,\om_2)$ is obtained by slitting $(X,\om)$ along $\wt{\ell}$ and gluing in parallelograms of width $w_2^Z$ along the sides of each segment in $\wt{\ell}$. We have shown that
\be
(X_2,\om_2) = (X,\om) + w_2^Z \eta_{\ell} \in \cM .
\ee
See Figure \ref{fig:sch2}.

Next, suppose that $\ell$ does not pass through $C_0^Z$. Then $\ell$ does not pass through $C_1^Z$ or $C_2^Z$. Possibly after swapping $C_1^Z$ and $C_2^Z$, there is an embedded path $\rho$ in $\Gam$ from $C_1^Z$ to a vertex in $\ell$ that is otherwise disjoint from $\ell$. Let $D_1,\dots,D_p$ and $\del_2,\dots,\del_p$ be the associated sequences of horizontal cylinders and horizontal saddle connections, indexed so that $D_1 = C_1^Z$ and so that $\del_j \cup \del_{j+1} \subset \ol{D_j}$ for $j = 2,\dots,p-1$. Let $r_1$ be the straight segment in $\ol{C_1^Z}$, from $Z$ to a point in $\del_2 \sm Z(\om)$, that is disjoint from $\gam_{C_1^Z}$ except at $Z$. Let $r_p$ be a straight segment in $\ol{D_p} \sm Z(\om)$, from a point in $\del_p \sm Z(\om)$ to the point on $\wt{\ell}$ in the bottom boundary of $D_p$, that is disjoint from $\gam_{D_p}$ and is disjoint from $\wt{\ell}$ except at its endpoint. Let $\wt{\rho} = r_1 \cup r_2 \cup \cdots \cup r_{p-1} \cup r_p$ be an embedded piecewise-geodesic path such that for $j = 2,\dots,p-1$, $r_j \subset (D_j \cup \del_j \cup \del_{j+1}) \sm Z(\om)$ is a straight segment crossing $D_j$ from top to bottom and disjoint from $\gam_{D_j}$. Vertically stretch $C_2^Z$ so that its height is greater than twice the sum of the heights of the other horizontal cylinders on $(X,\om)$, and let $(X_1,\om_1)$ be the resulting holomorphic $1$-form. Let $\varphi_1 = \wt{\rho}_1$, treated as a parametrized path in $(X_1,\om_1)$ starting at $Z$. The twin path $\varphi_2$ is embedded and contained in $C_2^Z \cup \{Z\}$. Slit along $\varphi_1 \cup \varphi_2$ and reglue opposite sides to get a holomorphic $1$-form ${\rm Sch}_{\varphi_1}(X_1,\om_1) \in \cM$, and let $\ol{\varphi}_1$, $\ol{\varphi}_2$ be the inverse twin paths on ${\rm Sch}_{\varphi_1}(X_1,\om_1)$. The loop $\wt{\ell}$ is preserved along the associated path from $(X_1,\om_1)$ to ${\rm Sch}_{\varphi_1}(X_1,\om_1)$, and the twin path of $\wt{\ell}$ on ${\rm Sch}_{\varphi_1}(X_1,\om_1)$ is an embedded path contained in $C_2^Z \cup \{Z\}$. Moreover, $\wt{\ell}$ and its twin are disjoint from the inverse paths $\ol{\varphi}_1 \cup \ol{\varphi}_2$. Thus, we can slit along $\wt{\ell}$ and its twin and reglue opposite sides, then slit along $\ol{\varphi}_1 \cup \ol{\varphi}_2$ and reglue opposite sides, and then restore $C_2^Z$ to its original height, to get a holomorphic $1$-form $(X_2,\om_2)$. Similarly to the previous case, we have
\be
(X_2,\om_2) = (X,\om) + w_2^Z \eta_\ell \in \cM .
\ee

Let $M = \dim_\Q L(\Gam)$, and let $\ell_1,\dots,\ell_M$ be a $\Q$-basis for $L(\Gam)$. There is a map
\be
F : (\R_{>0})^M \ra \Om\cM_g(k_1,\dots,k_n)
\ee
whose image contains $(X,\om)$ and consists of horizontally periodic holomorphic $1$-forms. On $F(t_1,\dots,t_M)$, the length of a horizontal saddle connection $\gam$ is given by $\sum_{j=1}^M t_j \eps_j$, where $\eps_j$ is $1$ or $0$ according to whether or not $\ell_j$ passes through $\gam$. The saddle connections $\gam_C$ on $(X,\om)$ persist on every holomorphic $1$-form in the image of $F$, and we require that the integral along $\gam_C$ is the same for all holomorphic $1$-forms in the image of $F$. Write $(X,\om) = F(a_1,\dots,a_M)$. We have shown that for $k = 1,\dots,M$, there is $j_k \in \{1,2\}$ such that
\be
F(a_1,\dots,a_{k-1},a_k + w_{j_k}^Z,a_{k+1},\dots,a_M) \in \cM .
\ee
By iterating the surgeries in the previous two paragraphs, and by scaling in the horizontal direction by positive real numbers, we then get that
\be
F(t(a_1+n_1 w_{j_1}^Z),\dots,t(a_M+n_M w_{j_M}^Z)) \in \cM
\ee
for all $t > 0$ and all positive integers $n_1,\dots,n_M$. Define
\be
S = \{(t(a_1+n_1 w_{j_1}^Z),\dots,t(a_M+n_M w_{j_M}^Z)) \; : \; t > 0, n_j \in \Z_{>0}\} \subset (\R_{>0})^M .
\ee
Since $(a_1 + n n_k w_{j_k}^Z)/n \ra n_k w_{j_k}^Z$ as $n \ra \infty$, the closure of $S$ contains every tuple of the form $(tn_1w_{j_1}^Z,\dots,tn_Mw_{j_M}^Z)$ with $t > 0$ and $n_j \in \Z_{>0}$. By rescaling the $k$-th coordinate by $1/w_{j_k}^Z$, we see that $S$ is dense if and only if
\be
S_1 = \{(t n_1, \dots, t n_M) \; : \; t > 0, n_j \in \Z_{>0}\}
\ee
is dense. Since $S_1$ contains $(\Q_{>0})^M$, we conclude that $S$ is dense in $(\R_{>0})^M$. Then since $F$ is continuous and $\cM$ is closed, $\cM$ contains the image of $F$. In particular, $\cM$ contains the linear paths $F(a_1,\dots,a_{k-1},a_k+tw_{j_1}^Z,a_{k+1},\dots,a_M)$, thus
\be
\eta_{\ell_k} \in T_{(X,\om)}^\R \cM
\ee
for $k = 1,\dots,M$. Choose $k$ such that $\ell_k$ passes through $C_1^Z$. Since $\ell_k$ is embedded, $\ell_k$ does not pass through $C_2^Z$. Along the locally defined path $(X,\om) + t\eta_{\ell_k} \in \cM$, the circumference of $C_1^Z$ changes while the circumference of $C_2^Z$ is constant. The free pair of pants $C_0^Z,C_1^Z,C_2^Z$ persists along this path, so by replacing $(X,\om)$ with $(X,\om) + t\eta_{\ell_k}$ for some small $t \in \R$, we can arrange that $w_1^Z/w_2^Z \notin \ol{\Q}$. Then by Remark \ref{rmk:fpp}, the proof of Theorem \ref{thm:rank2} shows that $\eta_C \in T_{(X,\om)}^\R\cM$ for all horizontal cylinders $C$ on $(X,\om)$. Since the $\eta_{\ell_j}$ and the $\eta_C$ span $H^1(X,Z(\om);\C)$, we have shown that $\cM$ has the same complex dimension as the ambient stratum $\Om\cM_g(k_1,\dots,k_n)$. Hence, $\cM$ is a connected component of a stratum.
\end{proof}

\begin{figure}
    \centering
    \includegraphics[width=0.6\textwidth]{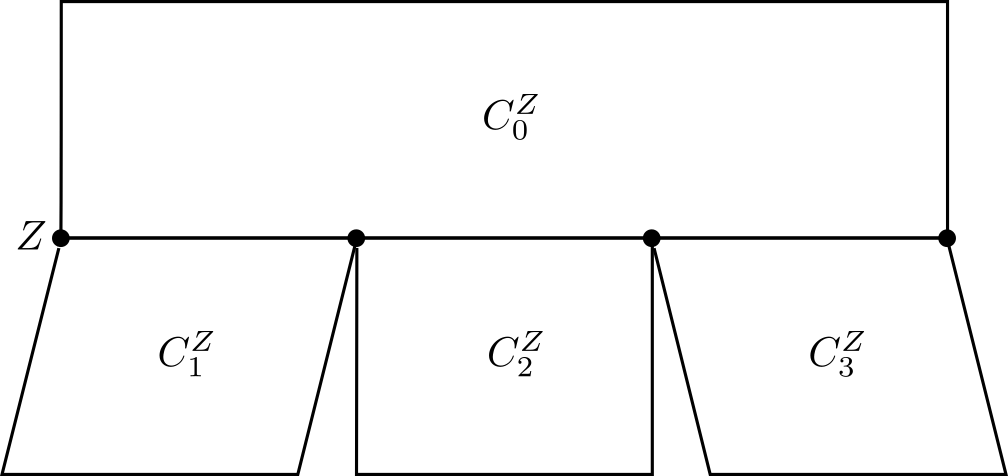}
    \caption{A $k$-legged pants with $k = 3$.}
    \label{fig:fpp2}
\end{figure}

\begin{rmk} \label{rmk:fpp2}
We conclude by remarking that an almost identical argument to the proof of Theorem \ref{thm:free} can be applied to the following configuration of $k+1$ horizontal cylinders $C_0^Z,\dots,C_k^Z$ for $k \geq 2$. Here, $Z$ is a zero of order $k-1$. For $1 \leq j \leq k$, the intersection of the bottom boundary of $C_0^Z$ with the top boundary of $C_j^Z$ is a single horizontal saddle connection $\gam_j^Z$ that is a loop through $Z$. This configuration looks like a ``$k$-legged pants''. See Figure \ref{fig:fpp2}. As in Theorem \ref{thm:free}, if each $C_j^Z$ can be sheared while remaining in a given orbit closure, then that orbit closure must be a stratum component.
\end{rmk}


\bibliographystyle{math}
\bibliography{my.bib}

{\small
\noindent
Email: kwinsor@math.harvard.edu

\noindent
Fields Institute for Research in Mathematical Sciences, Toronto, Canada
}

\end{document}